	\newtheorem[{}\it]{thm}{Theorem}[section]
	\newtheorem{cor}[thm]{Corollary}
	\newtheorem{lem}[thm]{Lemma}
	\theoremstyle{definition}
	\newtheorem{defn}{Definition}[section]
	\newtheorem*[{}\it]{notation}{Notation}
	\newcommand{\eval}[2][\right]{\relax
		\ifx#1\right\relax \left.\fi#2#1\rvert}
	\title{The Distribution of the Nontrivial Zeros of Riemann Zeta Function}
	\author{Jianyun Zhang}
	\email{zhangjy20@outlook.com} 
\begin{document}
		
	\begin{abstract}
		We improve the estimation of the distribution of the nontrivial zeros of Riemann zeta function $\zeta(\sigma+it)$ for sufficiently large $t$, which is based on an exact calculation of some special logarithmic integrals of nonvanishing $\zeta(\sigma+it)$ along well-chosen contours. A special and single-valued coordinate transformation $s=\tau(z)$ is chosen as the inverse of $z=\chi(s)$, and the functional equation $\zeta(s) = \chi(s)\zeta(1-s)$ is simplified as $G(z) = z\, G_-(\frac{1}{z})$ in the $z$ coordinate, where $G(z)=\zeta(s)=\zeta\circ\tau(z)$ and $G_-$ is the conjugated branch of $G$. Two types of special and symmetric contours $\partial D_{\epsilon}^1$ and $\partial D_{\epsilon}^2$ in the $s$ coordinate are specified, and improper logarithmic integrals of nonvanishing $\zeta(s)$ along $\partial D_{\epsilon}^1$ and $\partial D_{\epsilon}^2$ can be calculated as $2\pi i$ and $0$ respectively, depending on the total increase in the argument of $z=\chi(s)$. Any domains in the critical strip for sufficiently large $t$ can be covered by the domains $D_{\epsilon}^1$ or $D_{\epsilon}^2$, and the distribution of nontrivial zeros of $\zeta(s)$ is revealed in the end, which is more subtle than Riemann's initial hypothesis and in rhythm with the argument of $\chi(\frac{1}{2}+it)$.
	\end{abstract}
	
	\maketitle
	
	\thispagestyle{plain}	
	\pagestyle{plain}
	
	\section{Introduction}
		The Riemann zeta function $\zeta(s)$ is one of the most challenging functions. Numerous arithmetic and analytic resorts of approximation, including integral transformations, have been exhausted with rare results on the zeros of $\zeta(s)$ due to its complicated definition. 
		
		Estimation of the number of zeros of $\zeta(\sigma+it)$ in the domain $\{\sigma+it|0<\sigma<1,0<t\le T\}$ was initially proposed by Riemann as $\frac{T}{2\pi}\log\frac{T}{2\pi}-\frac{T}{2\pi}+O(\log T)$, and was first verified by von Mangoldt \cite{Mangoldt05}. By the argument principle, the logarithmic integral of nonvanishing $\zeta(\sigma+it)$ along the boundary of a rectangle $[-1,2]\times[0,T]$ was estimated via Stirling's approximation and Jenson's formula \cite{Broughan17}. To improve the estimation of the distribution of zeros, we propose an exact calculation of some special logarithmic integrals of nonvanishing $\zeta(\sigma+it)$ along well-chosen contours for sufficiently large $t$ in the following three steps. 
		
		In the first step, a special and single-valued coordinate transformation $s=\tau(z)$ is chosen to simplify the functional equation \cite{Titchmarsh86} 
		\begin{equation}\label{basicfun}
		\zeta(s) = \chi(s)\zeta(1-s),\quad s\in \mathbb{C}\backslash\{1\}
		\end{equation}
		where the chi function
		\begin{equation}
		\chi(s) = 2^s\pi^{s-1}\sin(\frac{\pi{s}}{2})\Gamma(1-s)
		\end{equation}
		satisfies 
		\begin{equation}\label{chi0}
		\chi(s)\chi(1-s)=1.
		\end{equation} 
		
		The tau function $s=\tau(z)$ is chosen as the inverse of $z=\chi(s)$ with $s$ restricted in almost horizontal strips. The properties of $z=\chi(s)$ and its inverse $s=\tau(z)$ are discussed in detail in Section \ref{china} and Section \ref{argpresrv}. The chi function $z=\chi(s)$ is shown in Section \ref{china} to be multivalued when $t$ is sufficiently large, with argument monotone decreasing with respect to $t$. And in Section \ref{argpresrv}, we prove that the function $\tau(z)$, the inverse of $\chi(s)$, can be well defined based on the construction of a Riemann surface $R$. Further we introduce the concept of argument-preserving arcs $\gamma_{\phi}$, and prove that the function $\tau(z)$ is branched and its $m$-th branch maps a slit complex plane $S_m$ to an almost horizontal strip $D_m$ conformally, much similiar to the $m$-th branch of logarithm function $\log(z)$.
		
		In the second step, the functional equation \eqref{basicfun} is simplified as
		\begin{equation}
		G(z) = z\, G_-(\frac{1}{z})
		\end{equation}
		where 
		$G(z)=\zeta(s)=\zeta\circ\tau(z)$ and $G_-$ is the conjugated branch of $G$ defined at the beginning of Section \ref{tauContour}.
		
		This new functional equation relates a pair of conjugated branches $G$ and $G_-$, and claims that the logarithmic integrals of the two nonvanishing branches along a contour in the $z$ coordinate differ by $2\pi i$ or $0$, depending on the total increase in $\arg(z)$. 
		
		In the third step, two types of special and symmetric contours $\partial D_{\epsilon}^1$ and $\partial D_{\epsilon}^2$ in the $s$ coordinate are chosen to calculate some improper logarithmic integrals of nonvanishing $\zeta(s)$. 
		
		There are three major features of the chosen contours $\partial D_{\epsilon}^1$ and $\partial D_{\epsilon}^2$ defined in Section \ref{tauContour} to facilitate the calculation of integral. Firstly, instead of the total scale of $0\le t\le T$, the contours of $s$ are restricted within one horizontal strip $D_m$, such that the increase in the argument of $z=\chi(s)$ is no larger than $2\pi$, and the contours of $z$ stay in one slit complex plane $S_m$, without cutting through any branch cut. Secondly, the contours of $s$ meet the critical line at points $s_m=\frac{1}{2}+it+i\epsilon$ where $\zeta(s_m)$ is nonvanishing and $\chi(s_m)\to1$ as $\epsilon\to0$, such that $\zeta(s_m)\to\zeta(\overline{s_m})$ and $\log\zeta(s_m)\to\log\zeta(\overline{s_m})$ as $\epsilon\to0$. Thirdly, the contours of $s$ can be rather arbitrary such that any domains in the critical strip can be covered by the domains $D_{\epsilon}^1$ or $D_{\epsilon}^2$.
		
		At the end of Section \ref{tauContour}, we prove that
		\begin{equation}
		\lim\limits_{\epsilon\to0}\int_{\partial D_{\epsilon}^1}\frac{\zeta'(s)}{\zeta(s)}ds = 2\pi i,\quad\lim\limits_{\epsilon\to0}\int_{\partial D_{\epsilon}^2}\frac{\zeta'(s)}{\zeta(s)}ds = 0
		\end{equation}				
		with the main symbols illustrated in Figure \ref{myfigure}. By the argument principle we claim in Theorem \ref{RH} that all the zeros of $\zeta(\sigma+it)$ are on the critical line $\{\sigma+it|\sigma=\frac{1}{2}\}$ for sufficiently large $t$, and that there exists one and only one nontrivial zero of $\zeta(\frac{1}{2}+it)$ where $-2\pi (m+1)<\arg(\chi(\frac{1}{2}+it))<-2\pi m$ for any sufficiently large integer $m$. Roughly speaking, there exists one and only one nontrivial zero when $t-t\log\frac{t}{2\pi}$ decreases by $2\pi$ for sufficiently large $t$, which is a more subtle distribution than Riemann's initial hypothesis.

	\section{The modulus and argument of the chi function}
	\label{china}
		Preliminary properties of the special function $\chi(s)$ 
		are proposed in this section. Briefly speaking, the modulus and argument of $\chi(\sigma+it)$ are monotone decreasing about $\sigma$ and $t$ respectively, when $s$ is far away from the real axis in the complex plane. We only discuss the $s$ in the upper half-plane for the symmetry of $t$. 
		
		\begin{lem}\label{chi1}
			Let $s=\sigma+it$.
			Then $|\chi(s)|=1$ for $\sigma=\frac{1}{2}$. There also exists a real number $M_{1}>0$, such that the modulus of $\chi(s)$ is a continuous function of $\sigma$ and $t$ when $t\ge M_{1}$, satisfying the following properties:
			\begin{enumerate}
				\renewcommand{\labelenumi}{(\roman{enumi})}
				\item\label{rad:mono} $|\chi(s)|$ decreases strictly monotonously with increasing $\sigma$, and tends to $0$ as $\sigma\to+\infty$.
				\item\label{rad:1}  $0<|\chi(s)|<1$ for $\frac{1}{2}<\sigma<+\infty$. 
				\item\label{rad:2}  $1<|\chi(s)|<+\infty$ for $-\infty<\sigma<\frac{1}{2}$.		
			\end{enumerate}		 
		\end{lem}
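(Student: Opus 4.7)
The plan rests on two pieces of input: the identity $\chi(s)\chi(1-s)=1$ from \eqref{chi0} combined with the reflection $\chi(\overline{s})=\overline{\chi(s)}$ (valid because $\chi$ is real-analytic on the real axis away from its poles), and Stirling-type asymptotics for $\Gamma(1-s)$ and $\psi(1-s)$. Using these, the three conclusions become geometric: the critical line emerges as a level set of $|\chi|$, a logarithmic derivative pins down monotonicity in $\sigma$, and super-exponential decay of $\Gamma$ yields the limit as $\sigma\to+\infty$.

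I would first settle $|\chi(\tfrac{1}{2}+it)|=1$: for $\sigma=\tfrac{1}{2}$ the partner $1-s$ coincides with $\overline{s}$, so \eqref{chi0} becomes $\chi(s)\overline{\chi(s)}=|\chi(s)|^{2}=1$. Next I would compute
\[
\frac{\chi'(s)}{\chi(s)} \;=\; \log(2\pi) + \frac{\pi}{2}\cot\frac{\pi s}{2} - \psi(1-s),
\]
and take real parts to obtain $\partial_{\sigma}\log|\chi(s)|$. For $t>0$, $\cot(\pi s/2)\to -i$ exponentially fast in $t$, so its contribution to the real part is $O(e^{-\pi t})$. Stirling's expansion of the digamma function gives $\operatorname{Re}\psi(1-s)=\log|1-s|+O(1/|1-s|)=\log t+O(1/t)$ once $t$ dominates $|\sigma|$. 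Hence
\[
\frac{\partial}{\partial\sigma}\log|\chi(\sigma+it)| \;=\; -\log\frac{t}{2\pi} + O(1/t),
\]
which is strictly negative for all $t\ge M_{1}$ if $M_{1}$ is chosen suitably large. Joint continuity in $(\sigma,t)$ is then immediate, and strict monotone decrease in $\sigma$ together with the first step yields (ii) and (iii) at once.

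For the remaining assertion in (i) that $|\chi(s)|\to 0$ as $\sigma\to+\infty$, I would apply the reflection identity $\Gamma(1-s)\Gamma(s)=\pi/\sin(\pi s)$ together with $\sin(\pi s)=2\sin(\pi s/2)\cos(\pi s/2)$ to recast
\[
\chi(s) \;=\; \frac{(2\pi)^{s}}{2\cos(\pi s/2)\,\Gamma(s)}.
\]
With $t\ge M_{1}$ held fixed, Stirling forces $|\Gamma(\sigma+it)|$ to grow super-exponentially in $\sigma$, while $(2\pi)^{\sigma}$ grows only exponentially and $|\cos(\pi s/2)|$ is bounded below (since $t>0$ is fixed), so the quotient tends to $0$. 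The main technical nuisance is uniformity of the Stirling remainders across the full $\sigma$-range needed in the monotonicity step; the standard fix is to handle any bounded window of $\sigma$ by the logarithmic-derivative bound above and to treat large $|\sigma|$ directly from the closed form just displayed, whose modulus is itself already monotone decreasing in $\sigma$ for $t\ge M_{1}$.
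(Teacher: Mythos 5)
Your proposal is correct, and its core coincides with the paper's: both get $|\chi(\tfrac{1}{2}+it)|=1$ from $\chi(s)\chi(1-s)=1$ with $1-s=\overline{s}$ on the critical line, and both reduce the monotonicity claim to the fact that $\frac{\partial}{\partial\sigma}\log|\chi(\sigma+it)|=\log(2\pi)-\log t+(\text{small})$, which is negative once $t\ge M_1$. Where you diverge is in how that derivative is produced and how the regime of unbounded $\sigma$ is handled. The paper first writes $|\chi(\sigma+it)|$ in closed asymptotic form using $|\Gamma(x+iy)|=\sqrt{2\pi}\,|y|^{x-1/2}e^{-\pi|y|/2}\{1+O(1/|y|)\}$ and then differentiates that expression in $\sigma$, reading off the monotonicity and the limits as $\sigma\to\pm\infty$ from the factor $t^{1/2-\sigma}$; it does not address whether the $O(1/|y|)$ remainder is uniform in $x=1-\sigma$ over the whole real line, which is the one thin point of its argument. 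You instead differentiate $\log\chi$ via the digamma function (same leading term, since $\operatorname{Re}\psi(1-s)=\log t+O(1/t)$ for bounded $\sigma$), and you treat large $|\sigma|$ separately through the reflection identity $\chi(s)=(2\pi)^{s}/\bigl(2\cos(\pi s/2)\Gamma(s)\bigr)$, where Stirling for $\Gamma(\sigma+it)$ as $\sigma\to+\infty$ along a fixed horizontal line is unproblematic and $|\cos(\pi s/2)|^{2}=\cos^{2}(\pi\sigma/2)+\sinh^{2}(\pi t/2)$ is bounded below. That split is the genuine added value of your write-up: it closes the uniformity gap the paper glosses over, at the cost of one extra identity. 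Everything else is a reorganization of the same estimates.
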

		
		\begin{proof}
			Taking $\sigma=\frac{1}{2}$ in \eqref{chi0}, then $|\chi(s)|=1$. 
			
			Considering the zeros of $\sin(\frac{\pi{s}}{2})$ and the poles of $\Gamma(1-s)$, the chi function $\chi(s)$ is meromophic on the entire complex plane, with poles at $s=1,3,5,\dots$ and zeros at $s=0,-2,-4,\dots$ All the poles and zeros of $\chi(s)$ are on the real axis, then $0<|\chi(s)|<+\infty$ when $t\ne0$.
			
			The following asymptotic expansion \cite{Beals10} is adopted for real $x$ and $y$:
			\begin{equation}\label{absgamma}
			|\Gamma(x+iy)|=\sqrt{2\pi}|y|^{x-\frac{1}{2}}e^{-\frac{1}{2}\pi |y|}\big\{1+O(\frac{1}{|y|})\big\}, \quad |y|\rightarrow\infty.	
			\end{equation}	
			Taking $x=1-\sigma$ and $y=-t$ in \eqref{absgamma}, then 
			\begin{equation}\label{abschi}
			\begin{split}
			|\chi(s)| &= |2^{\sigma+it}|\cdot|\pi^{\sigma-1+it}|\cdot|\sin\big\{\frac{\pi}{2}(\sigma+it)\big\}|\cdot|\Gamma(1-\sigma-it)| \\
			&= 2^{\sigma}\pi^{\sigma-1}\sqrt{\sin^2(\frac{\pi}{2}\sigma)+\sinh^2(\frac{\pi}{2}t)}\sqrt{2\pi}t^{\frac{1}{2}-\sigma}e^{-\frac{1}{2}\pi t}\big\{1+O(\frac{1}{t})\big\}\\
			&= \sqrt{2\pi}2^{\sigma}\pi^{\sigma-1}\sqrt{\frac{\sin^2(\frac{\pi}{2}\sigma)+\sinh^2(\frac{\pi}{2}t)}{e^{\pi t}}}t^{\frac{1}{2}-\sigma}\big\{1+O(\frac{1}{t})\big\}.
			\end{split}
			\end{equation}
			The modulus $|\chi(s)|$ is a continuous function of $\sigma$ and $t$ when $t$ is sufficiently large. It's obvious that $0<|\chi(s)|<1$ for all $\sigma>\frac{1}{2}$, and $|\chi(s)|>1$ for all $\sigma<\frac{1}{2}$, both as $t\to+\infty$. 
			
			For any sufficiently large $t$, we also have $|\chi(s)|\to0$ as $\sigma\to+\infty$, and $|\chi(s)|\to+\infty$ as $\sigma\to-\infty$. 	
			
			The infinitesimal $\Delta|\chi(s)|$ with respect to $\Delta\sigma$ is 
			\begin{equation}
			\begin{split}
			\frac{\Delta|\chi(s)|}{|\chi(s)|} &=
			\big\{\log (2\pi)-\log t+\frac{\frac{\pi}{4}\sin\pi\sigma}{\sin^2(\frac{\pi}{2}\sigma)+\sinh^2(\frac{\pi}{2}t)}\big\}\big\{1+O(\frac{1}{t})\big\}\Delta\sigma
			\end{split}.
			\end{equation}
			It's obvious that $|\chi(s)|$ decreases strictly monotonously with increasing $\sigma$ when $t$ is sufficiently large. 
			
		\end{proof}
		Lemma \ref{chi1} demonstrates that the critical line $\{s|\textup{Re}(s)=\frac{1}{2}\}$ in the $s$-plane is mapped to a unit circle $S^1$ by $\chi(s)$. 
		
		\begin{lem}\label{chi2}
			Let $s=\sigma+it$.
			There exists a real number $M_{2}>0$, such that the argument of $\chi(s)$ is a continuous function of $\sigma$ and $t$ when $t\ge M_{2}$, satisfying the following properties:
			\begin{enumerate}
				\renewcommand{\labelenumi}{(\roman{enumi})}
				\item\label{ang:1} 
				$\arg(\chi(s))$ decreases strictly monotonously with increasing $t$, and tends to $-\infty$ as $t\to+\infty$. 
				\item\label{ang:2} $\arg(\chi(s))$ remains almost constant with varying $\sigma$. 
				\item\label{ang:3} $\arg(\chi(s))=\arg(\chi(1-\overline{s}))$. 
			\end{enumerate}	
		\end{lem}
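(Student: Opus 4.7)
The plan is to derive a single asymptotic expansion for $\arg\chi(\sigma+it)$ as $t\to+\infty$, uniform for $\sigma$ in any fixed bounded interval, and then read off the three conclusions from it. Starting from the factorization already used in \lemref{chi1}, write
\[
\arg\chi(s) = t\log 2 + t\log\pi + \arg\sin\!\bigl(\tfrac{\pi s}{2}\bigr) + \arg\Gamma(1-s).
\]
For the sine term, $\sin(\pi s/2) = \sin(\pi\sigma/2)\cosh(\pi t/2) + i\cos(\pi\sigma/2)\sinh(\pi t/2)$ equals $\tfrac{i}{2}e^{\pi t/2}e^{-i\pi\sigma/2}\bigl(1+O(e^{-\pi t})\bigr)$ for large $t$, so $\arg\sin(\pi s/2) = \pi/2 - \pi\sigma/2 + O(e^{-\pi t})$. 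For the Gamma term, I would invoke the complex Stirling expansion
\[
\log\Gamma(1-s) = \bigl(\tfrac{1}{2}-s\bigr)\log(1-s) - (1-s) + \tfrac{1}{2}\log(2\pi) + O\bigl(\tfrac{1}{|1-s|}\bigr),
\]
together with $\log(1-s) = \log t - i\pi/2 + O(1/t)$, and take imaginary parts to get $\arg\Gamma(1-s) = -t\log t + t + \pi\sigma/2 - \pi/4 + O(1/t)$. The explicit $\pm\pi\sigma/2$ contributions then cancel, yielding
\begin{equation}\label{argchi:asymp}
\arg\chi(\sigma+it) = -t\log\tfrac{t}{2\pi} + t + \tfrac{\pi}{4} + O\bigl(\tfrac{1}{t}\bigr),
\end{equation}
with the $O(1/t)$ uniform in $\sigma$ on any fixed bounded interval.

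Claim (i) follows by differentiating \eqref{argchi:asymp} in $t$: the derivative equals $-\log(t/(2\pi)) + O(1/t^2)$, which is strictly negative once $t$ exceeds some $M_2$ and tends to $-\infty$ as $t\to+\infty$; integrating gives $\arg\chi(\sigma+it)\to -\infty$. Claim (ii) is immediate from \eqref{argchi:asymp}: the main term is independent of $\sigma$, so $\arg\chi$ varies only by $O(1/t)$ as $\sigma$ moves across any fixed bounded interval, and is therefore almost constant for large $t$. Claim (iii) is a genuine symmetry and requires no asymptotics: since $\chi$ is meromorphic on $\mathbb{C}$ and real-valued where defined on the real axis, Schwarz reflection yields $\overline{\chi(s)} = \chi(\overline{s})$; combined with \eqref{chi0} applied at $\overline{s}$ in place of $s$ this gives $\chi(1-\overline{s}) = 1/\chi(\overline{s}) = 1/\overline{\chi(s)}$, whence $\arg\chi(1-\overline{s}) = -\arg\overline{\chi(s)} = \arg\chi(s)$.

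The main obstacle is the uniformity of \eqref{argchi:asymp} in $\sigma$: one must control both the $O(1/|1-s|)$ remainder in Stirling's formula and the $O(1/t)$ remainder in the expansion of $\log(1-s)$ uniformly for $\sigma$ in a fixed bounded interval, so that a single threshold $M_2$ serves for both (i) and (ii). For the strict monotonicity in (i) one additionally needs these remainders to differentiate safely, keeping an $O(1/t^2)$ bound after $\partial_t$; this is standard from the integral representation of Stirling's remainder, but warrants explicit verification.
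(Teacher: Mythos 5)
Your proposal follows essentially the same route as the paper: both expand $\log\chi$ factor by factor, handle the sine term by extracting the dominant exponential $\tfrac{i}{2}e^{\pi t/2}e^{-i\pi\sigma/2}$, apply Stirling's expansion to the Gamma factor, and read (i) and (ii) off the resulting uniform expansion $\arg\chi(\sigma+it)=t-t\log\tfrac{t}{2\pi}+\tfrac{\pi}{4}+O(1/t)$, while (iii) in both cases reduces to the reflection identity combined with $\chi(s)\chi(1-s)=1$. The one nuance is that your reflection argument yields (iii) a priori only modulo $2\pi$, and since $\arg\chi$ here is the unrestricted continuous argument you still need the asymptotic expansion (applied to both $s$ and $1-\overline{s}$, which share the same $t$) to exclude a nonzero multiple of $2\pi$ --- precisely the step the paper makes explicit at the end of its proof, and which your expansion already supplies.
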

		
		\begin{proof}		
			When $t\ne0$, the logarithm of $\chi(s)$ is obtained as
			\begin{equation}\label{logchi}
			\begin{split}
			\log\chi(\sigma+it) &= (\sigma+it)\log 2+(\sigma-1+it)\log\pi \\ 
			&\quad +\log\sin\big\{\frac{\pi}{2}(\sigma+it)\big\}+\log\Gamma(1-\sigma-it). 
			\end{split}
			\end{equation}
			The third item in \eqref{logchi} is 
			\begin{equation*}
			\begin{split}
			\log\sin\big\{\frac{\pi}{2}(\sigma+it)\big\} 
			&= \log\frac{e^{-\frac{\pi}{2}t}e^{i\frac{\pi}{2}\sigma}-e^{\frac{\pi}{2}t}e^{-i\frac{\pi}{2}\sigma}}{2i} \\
			&= \log\frac{-e^{\frac{\pi}{2}t}e^{-i\frac{\pi}{2}\sigma}}{2i}+\log(1-\frac{e^{i\pi\sigma}}{e^{\pi t}}) \\
			&= \frac{\pi}{2}t - \log2 + i\frac{\pi}{2}(1-\sigma) + O(-\frac{e^{i\pi\sigma}}{e^{\pi t}}).
			\end{split}
			\end{equation*}	
			The last item in \eqref{logchi} can be expressed by the asymptotic expansion \cite{Edelyi53}
			\begin{equation}\label{loggamma}
			\log\Gamma(z+a)=(z+a-\frac{1}{2})\log z-z+\frac{1}{2}\log(2\pi)+O(\frac{1}{z})
			\end{equation}
			where $|\arg z|<\pi$ and $a\in\mathbb{C}$. 
			Taking $z=-it$ and $a=\sigma$ in \eqref{loggamma}, then
			\begin{equation*}
			\log\Gamma(\sigma-it)= (-it+\frac{1}{2}-\sigma)\log (-it) + it +\frac{1}{2}\log(2\pi)+O(i\frac{1}{t}).
			\end{equation*}
			Finally 
			\begin{equation}\label{argchi}
			\begin{split}
			\arg(\chi(\sigma+it)) &= \textup{Im}(\log\chi(\sigma+it)) \\
			&= t-t\log\frac{t}{2\pi} + \frac{\pi}{4} + O(\frac{1}{t}) + O(-\frac{\sin\pi\sigma}{e^{\pi t}}). 
			\end{split}
			\end{equation}
			The argument $\arg(\chi(\sigma+it))$, if not restricted in the range of $2\pi$, is a continuous function of $\sigma$ and $t$ when $t$ is sufficiently large.
			It's obvious that 
			\begin{equation}\label{limargchi}
			\arg(\chi(\sigma+it))\to t-t\log\frac{t}{2\pi}+\frac{\pi}{4}, \quad t\to+\infty. 
			\end{equation}
			The $\arg(\chi(\sigma+it))$ decreases strictly monotonously with increasing $t$ when $t$ is sufficiently large, and $\arg(\chi(\sigma+it))$ tends to $-\infty$ as $t\to+\infty$. 
			
			It's also observed that $\arg(\chi(\sigma+it))$ remains almost constant with varying $\sigma$ when $t$ is sufficiently large.
			
			Since $\chi(s)\ne 0$ when $t\ne0$, the argument of \eqref{chi0} is
			\begin{equation}
			\begin{split}
			2\pi m &=\arg(\chi(\sigma+it))+\arg(\chi(1-\sigma-it))\\
			&=\arg(\chi(\sigma+it))-\arg(\chi(1-\sigma+it)), \quad m=0,\pm 1,\pm 2,\dots. 
			\end{split}
			\end{equation}
			And \eqref{limargchi} excludes the possibility of any non-zero $m$ for sufficiently large $t$, although $\arg(\chi(s))$ is multivalued. 
		\end{proof}
		Lemma \ref{chi2} demonstrates that for any real constant $\sigma$, while $s=\sigma+it$ moves upwards on the vertical line $\{s|\textup{Re}(s)=\sigma\}$, the image $\chi(s)$ wraps around the origin clockwise for $t\ge M_{2}$. Combined with Lemma \ref{chi1}, while $s$ moves upwards on the critical line $\{s|\textup{Re}(s)=\frac{1}{2}\}$, the image $\chi(s)$ loops on the unit circle $S^1$ clockwise, passing through one point such as $\{1\}$ on $S^1$ infinitely many times.

	\section{The branch and the inverse of the chi function}	
	\label{argpresrv}
	    The function $\tau(z)$, the inverse of $\chi(s)$, is defined in this section based on the construction of a Riemann surface and the concept of argument-preserving arcs. Briefly speaking, when Lemma \ref{chi1} and Lemma \ref{chi2} apply, the inverse of $\chi(s)$ can be well defined as tau function $s=\tau(z)$ in \eqref{tau} which is branched, and each branch maps a slit complex plane to an almost horizontal strip conformally, as proved in Lemma \ref{conformalMap}.
	    
	    Lemma \ref{chi1} and Lemma \ref{chi2} apply when $t\ge M_1$ and $t\ge M_2$ respectively, and the following domain is focused on: 
	    \begin{defn}\label{FAR}
	    	The domain $D\subset\mathbb{C}$ 
	    	is said to be far away from the real axis(FAR for short), if a proper real number $M\ge \max(M_{1},M_{2})$ can be chosen, such that $\arg(\chi(s))\le \arg(\chi(\frac{1}{2}+iM))$ for all $s\in D$.
	    \end{defn}
	    As a special case, the veritical line $\{s|\sigma=\frac{1}{2},t\ge M\}$ is FAR for any $M\ge \max(M_{1},M_{2})$, since $\arg(\chi(\frac{1}{2}+it))\le \arg(\chi(\frac{1}{2}+iM))$ for all the points on the half line by Lemma \ref{chi2}. Definition \ref{FAR} can be extended to the lower half-plane as $\arg(\chi(\sigma+it))\ge \arg(\chi(\frac{1}{2}+iM))$ for $t<0$.
	    
	    For all $s\in D$, we observe that $0<|\chi(s)|<+\infty$ by Lemma \ref{chi1}, and that $\arg(\chi(s))$ is monotonous decreasing about $t$ and tends to $-\infty$ by Lemma \ref{chi2}. Hence a Riemann surface can be constructed as follows to make the range of $z=\chi(s)$ single-valued.
	    
	    The slit $z$-plane $\mathbb{C}\backslash[0,+\infty)$ is designated as the $m$th sheet $S_{m}$, where $m$ is any integer sufficiently large. And when every two sheets $S_{m}$ and $S_{m+1}$ are attached along the branch cut $(0,+\infty)$, a Riemann surface $R$ spread over the $z$-plane is constructed.
	    
	    Then the chi function 
	    \begin{equation}\label{Rchi}
	    z=\chi(s),\quad s\in D 
	    \end{equation}
	    where $z\in R$, is the composition of infinitely many branches
	    \begin{equation}\label{multichi}
	    z=\chi_{m}(s)=|\chi(s)|e^{\textup{Arg}(\chi(s))-2\pi im} 
	    \end{equation} 
	    where $z\in S_{m}\subset R$. 
	    
	    \begin{defn}
	    	Let $\phi\in \mathbb{R}$ be a constant. The arc $\gamma_{\phi}$ in the $s$-plane is said to be argument-preserving for the map $\chi:s\to z$, if $\arg(z)=\phi$ for all $s\in \gamma_{\phi}$.
	    \end{defn}
	    
	    \begin{lem}\label{argpreserving}
	    	Let $D=\{s|\sigma_1\le\textup{Re}(s)\le\sigma_2\}$ be a FAR domain where $\sigma_1<\frac{1}{2}<\sigma_2$.
	    	For any $s_{0}\in D$, there exists a unique argument-preserving arc $\gamma_{\phi}$ for the map $\chi$, such that $s_{0}\in\gamma_{\phi}$ and the arc $\gamma_{\phi}$ splits the domain $D$ 
	    	horizontally into two parts.
	    \end{lem}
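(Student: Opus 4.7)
The plan is to set $\phi:=\arg(\chi(s_0))$ and construct $\gamma_\phi$ as the graph, over $\sigma\in[\sigma_1,\sigma_2]$, of a continuous function $\sigma\mapsto t(\sigma)$ determined implicitly by $\arg(\chi(\sigma+it(\sigma)))=\phi$. For each fixed $\sigma$ I would study $f_\sigma(t):=\arg(\chi(\sigma+it))$: by Lemma \ref{chi2}(i), $f_\sigma$ is continuous and strictly monotone decreasing in $t$ on the FAR portion of $D$, with $f_\sigma(t)\to-\infty$ as $t\to+\infty$. Strict monotonicity immediately yields the uniqueness of any solution of $f_\sigma(t)=\phi$, and hence the uniqueness of the arc once existence is secured on each vertical slice.

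For existence on each slice I would exploit the quantitative $\sigma$-almost-constancy built into the asymptotic expansion \eqref{argchi}: the $\sigma$-dependence of $\arg(\chi(\sigma+it))$ is controlled by the $O(1/t)$ remainder together with the exponentially small term $O(\sin(\pi\sigma)/e^{\pi t})$, both uniform for $\sigma\in[\sigma_1,\sigma_2]$ and $t\ge M$. Writing $t_0:=\textup{Im}(s_0)$, this forces $f_\sigma(t_0)$ to lie in a narrow window around $\phi=f_{\textup{Re}(s_0)}(t_0)$, and the intermediate value theorem applied to the monotone function $f_\sigma$ on a small $t$-interval about $t_0$ produces the unique $t(\sigma)$ with $f_\sigma(t(\sigma))=\phi$.

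The implicit function theorem then upgrades $t(\sigma)$ to a smooth function on $[\sigma_1,\sigma_2]$, since $\partial f_\sigma/\partial t<0$ by Lemma \ref{chi2}(i); in fact \eqref{argchi} shows that this derivative is approximately $-\log(t/2\pi)$, which is bounded away from zero for large $t$. The arc is then defined by $\gamma_\phi:=\{\sigma+it(\sigma):\sigma_1\le\sigma\le\sigma_2\}$, and because it is the graph of a continuous function spanning the entire $\sigma$-width of $D$, the two open sets $\{s\in D:\textup{Im}(s)>t(\textup{Re}(s))\}$ and $\{s\in D:\textup{Im}(s)<t(\textup{Re}(s))\}$ are precisely the horizontal components into which $\gamma_\phi$ splits $D$.

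The step I expect to be the main obstacle is confirming that the arc never leaves the FAR domain, i.e.\ that $t(\sigma)\ge M$ for every $\sigma\in[\sigma_1,\sigma_2]$, so that Lemmas \ref{chi1} and \ref{chi2} remain in force along all of $\gamma_\phi$. This reduces to showing that the drift $|t(\sigma)-t_0|$ can be kept uniformly small by combining the exponential decay of the $\sigma$-error in \eqref{argchi} with the nondegenerate lower bound on $|\partial f_\sigma/\partial t|$; if necessary the threshold $M$ embedded in the FAR hypothesis may be enlarged to absorb this perturbation. Everything else amounts to routine bookkeeping of continuity, monotonicity, and the implicit function theorem.
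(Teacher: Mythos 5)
Your proposal is correct and follows essentially the same route as the paper: uniqueness from the strict monotonicity of $\arg(\chi(\sigma+it))$ in $t$, existence on each vertical slice from the intermediate value theorem combined with the near-constancy of the argument in $\sigma$ from \eqref{argchi}, continuity of the graph $t=h(\sigma)$ via the implicit function theorem, and the splitting of $D$ by a graph spanning $[\sigma_1,\sigma_2]$. The ``main obstacle'' you flag --- keeping the arc inside the FAR region by enlarging the threshold $M$ --- is exactly how the paper's Step~1 handles it, by inserting a buffer of width $2\epsilon_0$ in the argument between $M_0$ and the working threshold $M$.
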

	    \begin{proof}
	    	Let $\chi(\sigma+it)=r(\sigma,t)e^{i\phi(\sigma,t)}$ where $\phi(\sigma,t)=\arg(\chi(\sigma+it))$ and $r(\sigma,t)=|\chi(\sigma+it))|$.
	    	
	    	Step 1. Choose $M$ for the domain $D$.
	    	
	    	From \eqref{argchi} we know that $\phi(\sigma,t)$ is continuous and bounded for any finite $\sigma$ and $t$. 
	    	Let 
	    	$$\epsilon=\sup\limits_{\sigma\in[\sigma_1,\sigma_2]}|\phi(\sigma,t)-\phi(\frac{1}{2},t)|.$$
	    	There exists a real number $M_{3}>0$ and an arbitrarily small $\epsilon_{0}<\pi$, such that $\epsilon<\epsilon_{0}$ for all $t\ge M_{3}$, because $\epsilon\to 0$ as $t\to +\infty$ by Lemma \ref{chi2}.
	    	
	    	Let $M_{0}=\max(M_{1},M_{2},M_{3})$. 
	    	Then $|\phi(\sigma,M_{0})-\phi(\frac{1}{2},M_{0})|<\epsilon_{0}$ for all $\sigma\in[\sigma_1,\sigma_2]$. 
	    	Take a real number $M>M_{0}$ satisfying $\phi(\frac{1}{2},M)=\phi(\frac{1}{2},M_{0})-2\epsilon_{0}$. Then for all $\sigma\in[\sigma_1,\sigma_2]$ and $t\ge M$,
	    	\begin{equation}\label{inequality}
	    	\phi(\sigma,M)<\phi(\frac{1}{2},M)+\epsilon_{0}=\phi(\frac{1}{2},M_{0})-\epsilon_{0}<\phi(\sigma,M_{0}).
	    	\end{equation}  
	    	
	    	Choose the domain $D=\{s|\sigma_1\le\sigma\le\sigma_2,t\ge M\}$ as the FAR critical strip.
	    	
	    	Step 2. The existance of the unique set $\gamma_{\phi}$.
	    	
	    	If $s_{0}=\sigma_{0}+it_{0}\in D$, then $t_{0}\ge M>M_{0}$.
	    	For any constant $\sigma\in[\sigma_1,\sigma_2]$, a continuous real function of $t$ is constructed as
	    	\begin{equation}
	    	F(t)=\phi(\sigma,t)-\phi(\sigma_{0},t_{0}).
	    	\end{equation}	
	    	When $t=M_{0}$, we obtain that $F(M_{0})=\phi(\sigma,M_{0})-\phi(\sigma_{0},t_{0})>0$ by \eqref{inequality}. 
	    	When $t\to\infty$, we obtain that $F(t)=\phi(\sigma,t)-\phi(\sigma_{0},t_{0})$ decreases strictly monotonously and tends to $-\infty$ by Lemma \ref{chi2}.
	    	
	    	Therefore there exists a unique and bounded $\hat{t}$ satisfying $F(t)=0$ for each $\hat{\sigma}\in[\sigma_1,\sigma_2]$, 
	    	all of which form the unique set $\gamma_{\phi}=\{\sigma+it|\phi(\sigma,t)=\phi(\sigma_{0},t_{0}),\sigma_1\le\sigma\le\sigma_2\}$.
	    	
	    	Step 3. The set $\gamma_{\phi}$ is a continuous and simple arc.
	    	
	    	The two-variable function $H(\sigma,t)=\phi(\sigma,t)-\phi(\sigma_{0},t_{0})$ is continuous about $\sigma$ and $t$ for $\sigma\in(\sigma_1-\delta,\sigma_2+\delta)$ and $t\ge M$, where $\delta>0$ is small. Because $H(\sigma,t)$ is monotone decreasing about $t$, a unique implicit function $t=h(\sigma)$ can be established from $H(\sigma,t)=0$ near any of its solution $(\hat{\sigma},\hat{t})$ which is bounded. And $t=h(\sigma)$ is continuous and bounded near any $\hat{\sigma}\in[\sigma_1,\sigma_2]$.
	    	
	    	Therefore the single-valued function $t=h(\sigma)$ is continuous and bounded for all $\sigma\in[\sigma_1,\sigma_2]$, which shows that $\gamma_{\phi}$ is a continuous and simple arc, splitting the critical strip $\{\sigma+it|\sigma_1\le\sigma\le\sigma_2\}$ horizontally into two parts.
	    \end{proof} 
	    
	    The argument-preserving arc $\gamma_{\phi}\subset D$ is mapped to $\beta_{\phi}\subset R$ by the map  
	    \begin{equation}
	    \chi:\gamma_{\phi}\to\beta_{\phi},
	    \end{equation}
	    and $\beta_{\phi}$ is a line segment on a ray issuing from the origin.   
	    
	    \begin{cor}\label{chiarc}	
	    	The	map $\chi:\gamma_{\phi}\to \beta_{\phi}$ is continuous and one-to-one,
	    	satisfying the following properties:
	    	\begin{enumerate}
	    		\renewcommand{\labelenumi}{(\roman{enumi})}
	    		\item\label{map:1} If $s_{0}\in \gamma_{\phi}$, then $(1-\overline{s_{0}})\in \gamma_{\phi}$. 
	    		\item\label{map:2} Two arcs $\gamma_{\phi_{1}}$ and $\gamma_{\phi_{2}}$ do not intersect in the $s$-plane, if $\phi_{1}\ne\phi_{2}$.
	    	\end{enumerate}	
	    \end{cor}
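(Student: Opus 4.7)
My plan is to establish the four assertions one by one, leaning on Lemmas~\ref{chi1}, \ref{chi2}, and~\ref{argpreserving}. Continuity of $\chi|_{\gamma_{\phi}}$ is immediate: $\chi$ is meromorphic and by Lemma~\ref{chi1} it is finite and nonvanishing on the FAR domain~$D$, while Step~3 of the proof of Lemma~\ref{argpreserving} presents $\gamma_{\phi}$ as the graph of a continuous function $t=h(\sigma)$ on $[\sigma_{1},\sigma_{2}]$; composing yields a continuous parametrization of $\beta_{\phi}$.

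For injectivity, the defining condition $\arg\chi(s)=\phi$ on $\gamma_{\phi}$ already pins $\beta_{\phi}$ to a single ray from the origin, so it suffices to prove that $|\chi(s)|$ is strictly monotone along $\gamma_{\phi}$. I would write $\log\chi=\log r+i\phi$, valid locally on $D$ since $\chi$ is nonvanishing there, and apply the Cauchy--Riemann equations to obtain $r_{\sigma}/r=\phi_{t}$ and $r_{t}/r=-\phi_{\sigma}$. Implicit differentiation of $\phi(\sigma,h(\sigma))=\phi$ gives $h'(\sigma)=-\phi_{\sigma}/\phi_{t}$, and therefore
\begin{equation*}
\frac{d}{d\sigma}\log|\chi(\sigma+ih(\sigma))|=\phi_{t}+(-\phi_{\sigma})\left(-\frac{\phi_{\sigma}}{\phi_{t}}\right)=\frac{\phi_{t}^{2}+\phi_{\sigma}^{2}}{\phi_{t}}.
\end{equation*}
Part~(i) of Lemma~\ref{chi2} gives $\phi_{t}<0$ strictly throughout~$D$, so this derivative is strictly negative, delivering strict monotonicity of $|\chi|$ along $\gamma_{\phi}$ and hence injectivity of $\chi|_{\gamma_{\phi}}$.

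Property~(i) is then a direct consequence of part~(iii) of Lemma~\ref{chi2}: for any $s_{0}\in\gamma_{\phi}$ the identity $\arg\chi(1-\overline{s_{0}})=\arg\chi(s_{0})=\phi$ places $1-\overline{s_{0}}$ on the same argument-preserving arc. For property~(ii), the arcs $\gamma_{\phi}$ are by construction in Lemma~\ref{argpreserving} the level sets of the single-valued function $\phi(\sigma,t)$ from~\eqref{argchi}, so a shared point would force $\phi_{1}=\phi_{2}$. The only real subtlety is justifying $\phi_{t}<0$ on all of $D$ rather than merely for $t$ past some threshold; this is precisely where the \emph{strict} monotonicity asserted in Lemma~\ref{chi2}, rather than just its asymptotic form from~\eqref{limargchi}, is essential for the Cauchy--Riemann computation to deliver strict, not merely eventual, monotonicity of $|\chi|$ along $\gamma_{\phi}$.
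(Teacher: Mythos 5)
Your proposal is correct, and its skeleton matches the paper's proof: continuity and injectivity of $\chi|_{\gamma_{\phi}}$ come from the behaviour of $|\chi|$ along the arc, property (i) is read off from Lemma~\ref{chi2}(iii), and property (ii) follows because $\gamma_{\phi}$ is a level set of the single-valued $\phi(\sigma,t)$, i.e.\ from the uniqueness clause of Lemma~\ref{argpreserving}. Where you genuinely diverge is the injectivity step, and your version is the stronger one. The paper disposes of it in one line by citing Lemma~\ref{chi1}, which says $|\chi(\sigma+it)|$ is strictly decreasing in $\sigma$ at \emph{fixed} $t$; but along $\gamma_{\phi}$ the ordinate varies as $t=h(\sigma)$, so that citation by itself does not control $|\chi|$ along the arc. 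Your Cauchy--Riemann computation, giving $\frac{d}{d\sigma}\log|\chi(\sigma+ih(\sigma))|=(\phi_{t}^{2}+\phi_{\sigma}^{2})/\phi_{t}<0$, is precisely the missing link: it converts strict monotonicity of $\arg\chi$ in $t$ into strict monotonicity of $|\chi|$ along the level curve of the argument, which is what one-to-oneness actually needs. The only caveat is that both $h'=-\phi_{\sigma}/\phi_{t}$ and your final formula require $\phi_{t}$ to be nonzero, which ``strictly decreasing'' does not formally guarantee; on a FAR domain this is harmless because the expansion \eqref{argchi} gives $\phi_{t}\sim-\log\frac{t}{2\pi}$, bounded away from $0$, but you should say so explicitly rather than infer $\phi_{t}<0$ from monotonicity alone. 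In short: same route, but your derivative computation patches a real (if easily repaired) gap in the paper's one-line justification of injectivity.
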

	    
	    \begin{proof}
	    	For any $s\in\gamma_{\phi}$, let $\chi(s)=r(\sigma,t)e^{i\phi}$ where $\phi$ is a constant real.
	    	By Lemma \ref{chi1} the radius $r$ is continuous and monotone decreasing with respect to $\sigma$. Then the map $\chi:\gamma_{\phi}\to \beta_{\phi}$ is continuous and one-to-one.
	    	
	    	Lemma \ref{chi2} claims that $\arg(\chi(s_{0}))=\arg(\chi(1-\overline{s_{0}}))$, which makes property (i) true.
	    	
	    	Suppose $s_{0}$ is the point where $\gamma_{\phi_{1}}$ and $\gamma_{\phi_{2}}$ intersect. The assumption of  $\gamma_{\phi_{1}}\ne\gamma_{\phi_{2}}$ with $\phi_{1}=\phi_{2}$ contradicts Lemma \ref{argpreserving}, which makes property (ii) true.
	    \end{proof}
	    
	    The FAR domain $D$ is mapped to $U$ by the map  
	    \begin{equation}
	    \chi:D\to U
	    \end{equation}
	    where $U$ is the domain in the Riemann surface $R$. 
	    
	    When the branch cut $[0,+\infty)$ is chosen, the domain $U$ can be separated into infinitely many sheets as 
	    \begin{equation}
	    U_m=U(\phi_2,\phi_1)=\bigcup_{\phi_2<\phi< \phi_1}\beta_{\phi}
	    \end{equation}
	    where $m$ is any sufficiently large integer and $$\phi_1=-2\pi m,\quad \phi_2=\phi_1-2\pi.$$
	    Each $U_m$ is a domain in the slit complex plane $S_m$. Correspondingly the domain $D$ can be separated into infinitely many horizontal strips as  
	    \begin{equation}
	    D_m=D(\phi_2,\phi_1)=\bigcup_{\phi_2<\phi< \phi_1}\gamma_{\phi}.
	    \end{equation}
	    Each $D_m$ is the pre-image of $U_m$.
	    
	    \begin{cor}\label{chiPatch}
	    	The	map $\chi:D_m\to U_m$ is one-to-one, and the inverse map $\chi^{-1}:U_m\to D_m$ can be defined on the whole slit plane $S_m$. 
	    \end{cor}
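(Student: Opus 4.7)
The plan is to split the corollary into two claims and handle them separately: (a) injectivity of $\chi$ restricted to $D_m$, and (b) the equality $U_m=S_m$, which is what makes the inverse well-defined on the entire slit plane.

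For (a), I would exploit the fibered decompositions $D_m=\bigcup_{\phi_2<\phi<\phi_1}\gamma_\phi$ and $U_m=\bigcup_{\phi_2<\phi<\phi_1}\beta_\phi$ in order to reduce injectivity to the previous corollary. By Corollary~\ref{chiarc}(ii) distinct argument-preserving arcs $\gamma_\phi,\gamma_{\phi'}\subset D$ are disjoint, and their images $\beta_\phi,\beta_{\phi'}$ lie on rays from the origin at distinct angles, so two points of $D_m$ on different fibres cannot collide under $\chi$. Within a single fibre $\gamma_\phi$, Corollary~\ref{chiarc} already states that $\chi:\gamma_\phi\to\beta_\phi$ is continuous and one-to-one (monotonicity of $|\chi|$ in $\sigma$ on the arc, from Lemma~\ref{chi1}(i), is what does the work). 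Assembling the fibrewise statements gives injectivity on all of $D_m$.

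For (b), I would fix an arbitrary target $z_0=r_0\,e^{i\phi_0}\in S_m$ with $\phi_0\in(\phi_2,\phi_1)=(-2\pi(m+1),-2\pi m)$ and $r_0\in(0,+\infty)$, and produce a preimage $s_0\in D_m$ in two steps. First, I would invoke Lemma~\ref{argpreserving} to exhibit the argument-preserving arc $\gamma_{\phi_0}$ on which $\arg(\chi(s))=\phi_0$. Second, I would show that $|\chi|$ attains the prescribed value $r_0$ somewhere on $\gamma_{\phi_0}$: by Lemma~\ref{chi1}(i), along the arc $|\chi(\sigma+it)|$ is continuous, strictly monotone decreasing in $\sigma$, and tends to $0$ as $\sigma\to+\infty$, while the symmetry $\chi(s)\chi(1-s)=1$ combined with Corollary~\ref{chiarc}(i) forces $|\chi|\to+\infty$ as $\sigma\to-\infty$. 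The intermediate value theorem then yields $s_0\in\gamma_{\phi_0}\subset D_m$ with $\chi(s_0)=r_0 e^{i\phi_0}=z_0$.

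The main obstacle I anticipate is making the passage to the full slit plane honest, since Lemma~\ref{argpreserving} is only stated for a horizontally bounded FAR strip $\{\sigma_1\le\sigma\le\sigma_2\}$. One has to verify that, as $\sigma_1\to-\infty$ and $\sigma_2\to+\infty$, the implicit function $t=h(\sigma)$ constructed in Step~3 of that lemma's proof continues to exist, stays bounded, and does not allow $\arg(\chi)$ to leak out of the interval $(\phi_2,\phi_1)$ into a neighbouring sheet $D_{m\pm 1}$. The needed control comes from \eqref{argchi}, which shows that the $\sigma$-dependence of $\arg(\chi)$ decays like $\sin(\pi\sigma)\,e^{-\pi t}$ while the $t$-dependence is dominated by the $t-t\log\frac{t}{2\pi}$ term; writing this out carefully is the one piece of work not already packaged into the prior lemmas.
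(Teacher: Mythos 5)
Your proposal follows essentially the same route as the paper's own proof: injectivity is reduced to Corollary~\ref{chiarc} via the fibration of $D_m$ by argument-preserving arcs, and surjectivity onto $S_m$ comes from each $\beta_\phi$ being a full ray segment running from $0$ to $\infty$ (the paper states this via Lemma~\ref{chi1} and the limits of $|\chi|$ as $\sigma\to\pm\infty$, exactly your intermediate-value argument). The only difference is that you explicitly flag the passage from the horizontally bounded strip of Lemma~\ref{argpreserving} to unbounded $\sigma$ as needing justification, a point the paper's proof asserts without further argument.
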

	    \begin{proof}
	    	Suppose $s=\sigma+it\in D_m$ and $z=re^{i\phi}\in U_m$.
	    	
	    	If $\chi(s_1)=\chi(s_2)=r_0e^{i\phi_0}$ where $s_1,s_2\in D_m$, then $s_1,s_2\in\gamma_{\phi_0}$. Corollary \ref{chiarc} requires that $s_1=s_2$ for they share a single $r_0$. Then the map $\chi$ is one-to-one from $D_m$ to $U_m$. 
	    	
	    	Let $\chi:\gamma_{\phi}\to\beta_{\phi}$ where $\gamma_{\phi}$ is the arc preserving the argument $\phi$ for the map $\chi$, and let $$D_0=D_m\cap\{s|\sigma_1\le\textup{Re}(s)\le\sigma_2\}$$ where $\sigma_1<\frac{1}{2}<\sigma_2$.
	    	
	    	Lemma \ref{argpreserving} claims that $\gamma_{\phi}$ always intersects the vertical lines $\{s|\sigma=\sigma_1\}$ and $\{s|\sigma=\sigma_2\}$ in the FAR domain $D_0$, even as $\sigma_2\to+\infty$ and $\sigma_1\to-\infty$.		
	    	Lemma \ref{chi1} continues to claim that $\beta_{\phi}$ is a line segment with one end tending to the origin, and with the other end tending to $\infty$. As $\phi$ decreases by $2\pi$, the domain $U_m$ covers the whole complex plane $\mathbb{C}$ except the branch cut. 
	    	
	    \end{proof}
	    
	    The tau function $\tau(z)$, inverse of the $m$-th branch function \eqref{multichi}, can be well defined based on Corollary \ref{chiPatch} as 
	    \begin{equation}\label{tau}
	    s=\tau(z)=\chi^{-1}(z), \quad z\in \mathbb{C}\backslash [0,+\infty)
	    \end{equation} 
	    where $-2\pi (m+1)<\arg(z)<-2\pi m$. 
	    
	    \begin{lem}\label{conformalMap}	
	    	The tau function $s=\tau(z)$ is a conformal mapping of the slit plane $S_m$ onto the horizontal strip $D_m$. 
	    \end{lem}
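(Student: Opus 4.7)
The plan is to reduce the conformality claim to the standard complex-analytic fact that an injective holomorphic map on a domain has a holomorphic inverse, hence is a conformal isomorphism. All the substantive work has already been done in Corollary~\ref{chiPatch}; this lemma merely repackages it in analytic language.

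First I would check that $\chi$ is holomorphic and nonvanishing on $D_m$. From the explicit formula $\chi(s)=2^{s}\pi^{s-1}\sin(\pi s/2)\Gamma(1-s)$, the proof of Lemma~\ref{chi1} observes that all poles and zeros of $\chi$ lie on the real axis. Since $D_m$ is contained in a FAR strip with $t\ge M>0$, the restriction $\chi|_{D_m}$ is a holomorphic function into $\mathbb{C}\setminus\{0\}$. Next I would record that $D_m$ is an open connected subset of $\mathbb{C}$: by Lemma~\ref{argpreserving} each argument-preserving arc $\gamma_\phi$ is a continuous simple arc that depends continuously on $\phi$, so $D_m=\bigcup_{\phi_2<\phi<\phi_1}\gamma_\phi$ is an open horizontal strip.

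Then I would invoke Corollary~\ref{chiPatch}, which establishes that $\chi:D_m\to U_m$ is one-to-one and that the inverse extends to the full slit plane $S_m$, so in fact $\chi$ is a holomorphic bijection from $D_m$ onto $S_m$. The standard theorem on univalent holomorphic functions (for instance, the open mapping theorem combined with the fact that injectivity forces $\chi'(s)\ne 0$ at every point of $D_m$) then yields that $\tau=\chi^{-1}$ is holomorphic on $S_m$ with
\[
  \tau'(z)=\frac{1}{\chi'(\tau(z))}\ne 0 \quad\text{for all } z\in S_m.
\]
A holomorphic map with nowhere-vanishing derivative is angle-preserving, so $\tau$ is a conformal mapping of $S_m$ onto $D_m$.

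There is no deep obstacle; the proof is essentially a citation of two facts. The only mildly delicate point is making sure that the construction of $D_m$ via the argument-preserving arcs really produces an open set, which I would handle by quoting the continuity of the implicit function $t=h(\sigma)$ built in Step~3 of the proof of Lemma~\ref{argpreserving}. Once openness and connectedness of $D_m$ are in hand, the appeal to injectivity plus the inverse function theorem for holomorphic maps finishes the argument.
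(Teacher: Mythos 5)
Your proof is correct, but it reaches the key fact $\chi'\ne 0$ by a different road than the paper. The paper's proof never invokes injectivity at this stage: it writes the derivative of $\chi(s)=r e^{i\phi}$ explicitly as
\begin{equation*}
\chi'(s)=\frac{\partial r}{\partial\sigma}e^{i\phi}+i\,\frac{\partial\phi}{\partial\sigma}\,\chi(s)
=e^{i\phi}\Bigl(\frac{\partial r}{\partial\sigma}+i\,r\,\frac{\partial\phi}{\partial\sigma}\Bigr),
\end{equation*}
and then reads off $\chi'(s)\ne 0$ from the strict monotonicity $\frac{\partial r}{\partial\sigma}<0$ of Lemma~\ref{chi1} (together with $\frac{\partial\phi}{\partial\sigma}\to 0$ from Lemma~\ref{chi2}), finishing with the implicit/inverse function theorem to get analyticity of $\tau$ on $S_m$. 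You instead take injectivity of $\chi:D_m\to U_m$ from Corollary~\ref{chiPatch} as the input and cite the standard theorem that a univalent holomorphic map on a domain has nowhere-vanishing derivative and a holomorphic inverse. Your route is cleaner and more robust: it bypasses the polar-coordinate derivative computation entirely and does not depend on any quantitative control of $\frac{\partial\phi}{\partial\sigma}$, at the cost of leaning on Corollary~\ref{chiPatch} (whose own proof rests on the arc machinery of Lemma~\ref{argpreserving}). The paper's route is more self-contained at this point in the text and gives slightly more information (an explicit lower-bound mechanism for $|\chi'|$ via $\frac{\partial r}{\partial\sigma}$), but either argument legitimately establishes the lemma. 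Your attention to the openness of $D_m$ via the continuity of the implicit function $t=h(\sigma)$ is a point the paper glosses over and is worth keeping.
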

	    \begin{proof}
	    	Suppose $\chi:s\to z$ where $s=\sigma+it\in D_m$ and $z=re^{i\phi}\in S_m$.
	    	
	    	The FAR domain $D_m$ contains neither zeros nor poles of $\chi(s)$. 
	    	Since $\chi(s)$ is analytic for all $s\in D_m$, 
	    	the function $\chi(s)$ is differentiable for all $s\in D_m$,
	    	and the derivative of the nonzero $\chi(s)=re^{i\phi}$ is
	    	\begin{equation}
	    	\begin{split}
	    	\chi'(s)&=\frac{\partial(r\cos\phi)}{\partial\sigma} + i\frac{\partial(r\sin\phi)}{\partial\sigma}\\
	    	&=\frac{\partial r}{\partial\sigma}e^{i\phi} + i\frac{\partial\phi}{\partial\sigma}\chi(s).
	    	\end{split}
	    	\end{equation}
	    	
	    	Both Lemma \ref{chi1} and Lemma \ref{chi2} applies for all $s\in D_m$, which require $\frac{\partial r}{\partial\sigma}<0$ and $\frac{\partial\phi}{\partial\sigma}\to0$. Therefore $\chi'(s)\ne0$ for all $s\in D_m$. And the inverse map $\tau:z\to s$ is analytic on $S_m$ by implicit function theorem.	
	    \end{proof}	 
	    
	    The tau function $\tau(z)$ can also be defined on the negative $m$-th sheet.
	    \begin{defn}\label{conjBranch}
	    	The branch 
	    	\begin{equation}\label{tau-}
	    	s=\tau_{-}(z), \quad z\in S_m^*
	    	\end{equation}
	    	is said to be the conjugated branch of 
	    	\begin{equation}\label{tau+}
	    	s=\tau(z), \quad z\in S_m
	    	\end{equation}
	    	if $S_m^{*}=\{\overline{z}|z\in S_m\}$ is the reflection of $S_m$.
	    \end{defn}
	    The conjugated branch of \eqref{tau} is
	    \begin{equation}\label{tau1-}
	    s=\tau_-(z)=\chi^{-1}(z), \quad z\in \mathbb{C}\backslash [0,+\infty)
	    \end{equation} 
	    where $2\pi m<\arg(z)< 2\pi (m+1)$.
	    
	    Another pair of conjugated branches are available if $(-\infty,0]$ is chosen to be the branch cut. The corresponding tau functions are 
	    \begin{equation}\label{tau2+}
	    s=\tau(z)=\chi^{-1}(z), \quad z\in \mathbb{C}\backslash (-\infty,0]
	    \end{equation} 
	    where $-\pi-2\pi m<\arg(z)<\pi-2\pi m$, and 
	    \begin{equation}\label{tau2-}
	    s=\tau_-(z)=\chi^{-1}(z), \quad z\in \mathbb{C}\backslash (-\infty,0]
	    \end{equation} 
	    where $-\pi + 2\pi m<\arg(z)< \pi + 2\pi m$. 
	    
	    Generally speaking, the topology of $\chi(s)$ is similiar to the topology of $e^s$, when $s$ is far away from the real axis in the complex plane. And the topology of $\tau(z)$ is similiar to the topology of $\log(z)$.

	\section{Special logarithmic integral of the Riemann zeta function}
	\label{tauContour}	
		Special logarithmic integrals of nonvanishing $\zeta(s)$ are calculated along two types of well-chosen contours in this section. Briefly speaking, functional equation \eqref{basicfun} for $\zeta(s)$ can be largely simplified as \eqref{funEquation} by the coordinate transformation $s=\tau(z)$, and the improper logarithmic integrals of nonvanishing $\zeta(s)$ can be calculated exactly if the contours in the $s$ coordinate satisfy several features in Definition \ref{simpleD1} and Definition \ref{simpleD2}. Riemann hypothesis is verified based on Lemma \ref{Res1} and Lemma \ref{Res2} since the contours can be chosen arbitrarily, and more detail about the distribution of nontrivial zeros of $\zeta(s)$ is revealed in Theorem \ref{RH}.
		
		The composite function is introduced based on \eqref{tau} or \eqref{tau2+} as
		\begin{equation}\label{multifun}
		w=\zeta(s)=\zeta\circ\chi^{-1}(z)=\zeta\circ\tau(z)=G(z),\quad z\in S_m.
		\end{equation}
		where $-2\pi (m+1)<\arg(z)<-2\pi m$ or $-\pi-2\pi m<\arg(z)<\pi-2\pi m$, and $m$ is sufficiently large. 
		
		The function $G(z)$ is also branched in accordance with the branch of $\tau(z)$. And $G(z)$ is analytic on the slit plane $S_m$ by Lemma \ref{conformalMap}.
		
		By Definition \ref{conjBranch} the conjugated branch of \eqref{multifun} is
		\begin{equation}\label{G-}
		w=\zeta(s)=\zeta\circ\tau_-(z)=G_-(z),\quad z\in S_m^*
		\end{equation}
		where $S_m^*$ is the reflection of $S_m$.
		
		Let the point $z\in S_m$ and 
		\begin{equation}\label{Point1}
		G:z\stackrel{\tau}{\to}s \stackrel{\zeta}{\to}\zeta(s).
		\end{equation}		
		Supposing $\eta=\chi(1-s)$, by \eqref{chi0} we have
		\begin{equation}
		\eta=\chi(1-s)=\frac{1}{\chi(s)}=\frac{1}{z}.
		\end{equation}
		Lemma \ref{chi2} claims that $\arg(\overline{\eta})=\arg(z)$ and $\arg(\eta)=-\arg(z)$, requiring the points $\overline{\eta}\in S_m$ and $\eta\in S_m^{*}$ respectively and
		\begin{equation}\label{Point23}
		\begin{split}
		G&:\overline{\eta}=\overline{(\frac{1}{z})}\stackrel{\tau}{\to}1-\overline{s} \stackrel{\zeta}{\to}\overline{\zeta(1-s)},\\
		G_-&:\eta=\frac{1}{z}\stackrel{\tau_-}{\to}1-s \stackrel{\zeta}{\to}\zeta(1-s).
		\end{split}		
		\end{equation}	
		
		The fundamental functional equation \eqref{basicfun} can be rewritten as
		\begin{equation}\label{funEquation}
		G(z) = z\, G_-(\frac{1}{z})
		\end{equation}		
		
		Let $\tau:S_m\to D_m$ and the domain $D_m$ is a horizontal strip  
		\begin{equation}\label{HStrip}
		D_m=D(\phi_2,\phi_1)=\bigcup_{\phi_2<\phi< \phi_1}\gamma_{\phi} 
		\end{equation}
		where $\phi_1=-2\pi m$ or $\phi_1=\pi-2\pi m$, and $\phi_2=\phi_1-2\pi$.
		
		We come to study the logarithmic integral of nonvanishing zeta function
		\begin{equation}\label{logintzeta}
		\int_{\partial D_{\epsilon}}\frac{\zeta'(s)}{\zeta(s)}ds
		\end{equation}
		where the domain $D_{\epsilon}$ is defined as follows.
		\begin{defn}\label{simpleD}
			A domain $D_{\epsilon}$ is said to be simple, if 
			\begin{enumerate}
				\renewcommand{\labelenumi}{(\roman{enumi})}
				\item  the domain $D_{\epsilon}\subset D_m$ is bounded, simple connected, and symmetric with respect to the vertical line $\{s|\textup{Re}(s)=\frac{1}{2}\}$;
				\item  the boundary $\partial D_{\epsilon}$ is piecewise smooth, and $\zeta(s)\ne0$ for all $s\in \partial D_{\epsilon}$;
				\item  the boundary $\partial D_{\epsilon}$ meets the vertical line $\{s|\textup{Re}(s)=\frac{1}{2}\}$ only twice.
			\end{enumerate}			
		\end{defn}
		
		Two types of simple domains $D_{\epsilon}$ are studied. 
		The illustration of type-one domain $D_{\epsilon}^1$ can be found in Figure \ref{myfigure}.
		\begin{defn}\label{simpleD1}
			A domain $D_{\epsilon}^1$ is said to be type-one, if 
			\begin{enumerate}
				\renewcommand{\labelenumi}{(\roman{enumi})}
				\item  the domain $D_{\epsilon}^1\subset D_m$ is simple and $$D_m=D(\phi_2,\phi_1)=\bigcup_{\phi_2<\phi< \phi_1}\gamma_{\phi}$$ where $\phi_1=-2\pi m$ and $\phi_2=-2\pi (m+1)$;
				\item  the boundary $\partial D_{\epsilon}^1$ meets the vertical line $\{s|\textup{Re}(s)=\frac{1}{2}\}$ at two points
				$$s_{m+1}^-=\frac{1}{2}+i(t_{m+1}-\epsilon) \quad\text{and}\quad s_m^+=\frac{1}{2}+i(t_m+\epsilon)$$
				where $\arg(\chi(\frac{1}{2}+it_m))=-2\pi m$, $\arg(\chi(\frac{1}{2}+it_{m+1}))=-2\pi (m+1)$, and $\epsilon>0$ is arbitrarily small.
			\end{enumerate}			
		\end{defn}
		
		\begin{defn}\label{simpleD2}
			A domain $D_{\epsilon}^2$ is said to be type-two, if 
			\begin{enumerate}
				\renewcommand{\labelenumi}{(\roman{enumi})}
				\item  the domain $D_{\epsilon}^2\subset D_m$ is simple and $$D_m=D(\phi_2,\phi_1)=\bigcup_{\phi_2<\phi< \phi_1}\gamma_{\phi}$$ where $\phi_1=\pi-2\pi m$ and $\phi_2=-\pi-2\pi m$;
				\item  the boundary $\partial D_{\epsilon}^2$ meets the vertical line $\{s|\textup{Re}(s)=\frac{1}{2}\}$ at two points
				$$s_m^+=\frac{1}{2}+i(t_m+\epsilon) \quad\text{and}\quad s_m^-=\frac{1}{2}+i(t_m-\epsilon)$$
				where $\arg(\chi(\frac{1}{2}+it_m))=-2\pi m$, and $\epsilon>0$ is arbitrarily small.
			\end{enumerate}			
		\end{defn}
		
		\begin{lem}\label{Res1}
			The logarithmic integral of nonvanishing zeta function 
			\begin{equation}
			\lim\limits_{\epsilon\to0}\int_{\partial D_{\epsilon}^1}\frac{\zeta'(s)}{\zeta(s)}ds = 2\pi i
			\end{equation}
			where $D_{\epsilon}^1$ is type-one simple domain defined in Definition \ref{simpleD1}.			
		\end{lem}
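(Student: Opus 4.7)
The plan is to reduce the integral to an explicit $\arg\chi$ computation by combining the reflection symmetry of $\partial D_\epsilon^1$ about the critical line with the functional equation. Splitting $\partial D_\epsilon^1$ into its right arc $\gamma^+$ (with $\sigma>\frac12$, oriented from $s_m^+$ up to $s_{m+1}^-$) and its left arc $\gamma^-$ (with $\sigma<\frac12$, from $s_{m+1}^-$ back to $s_m^+$), I would rewrite the integrand on $\gamma^-$ by differentiating $\log\zeta(s)=\log\chi(s)+\log\zeta(1-s)$, which gives
\begin{equation*}
\frac{\zeta'(s)}{\zeta(s)} \;=\; \frac{\chi'(s)}{\chi(s)} \;-\; \frac{\zeta'(1-s)}{\zeta(1-s)}.
\end{equation*}

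Next I would transform the $\zeta'(1-s)/\zeta(1-s)$ term via the reflection $s\mapsto 1-\bar s$. By the symmetry of $D_\epsilon^1$ this maps $\gamma^-$ bijectively (with reversed orientation) onto $\gamma^+$; combined with Schwarz reflection $\zeta(\bar w)=\overline{\zeta(w)}$ and a parametric change of variables, one obtains
\begin{equation*}
\int_{\gamma^-}\frac{\zeta'(1-s)}{\zeta(1-s)}\,ds \;=\; \overline{\int_{\gamma^+}\frac{\zeta'(s)}{\zeta(s)}\,ds}.
\end{equation*}
Writing $A=\int_{\gamma^+}\zeta'(s)/\zeta(s)\,ds$ and assembling the two arcs yields
\begin{equation*}
\int_{\partial D_\epsilon^1}\frac{\zeta'(s)}{\zeta(s)}\,ds \;=\; A \;-\; \overline{A} \;+\; \int_{\gamma^-}\frac{\chi'(s)}{\chi(s)}\,ds \;=\; 2i\,\textup{Im}(A) \;+\; \int_{\gamma^-}\frac{\chi'(s)}{\chi(s)}\,ds.
\end{equation*}

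The $\chi$-integral can be evaluated directly. Since $\chi$ is analytic and nonvanishing throughout $D_m$, $\int_{\gamma^-}\chi'(s)/\chi(s)\,ds$ is just the change of a continuous branch of $\log z$ along the image arc $\chi(\gamma^-)\subset S_m$, which runs from $\chi(s_{m+1}^-)$ to $\chi(s_m^+)$. By Lemma~\ref{chi1}, $|\chi|\to 1$ at both endpoints as $\epsilon\to 0$, so the real part vanishes; by Lemma~\ref{chi2} together with Definition~\ref{simpleD1}(ii), $\arg\chi(s_{m+1}^-)\to-2\pi(m+1)$ and $\arg\chi(s_m^+)\to-2\pi m$, for a net imaginary change of $+2\pi$. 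Hence this integral tends to $2\pi i$.

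The main obstacle is to show $\textup{Im}(A)\to 0$. The crucial input is the second feature of Definition~\ref{simpleD1}: at both endpoints $s_m^+$ and $s_{m+1}^-$, $\chi$ tends to $1$, so the functional equation forces $\zeta$ to tend to its complex conjugate there, pinning the endpoint values of $\zeta$ to the real axis. Therefore $\arg\zeta$ at each endpoint approaches an integer multiple of $\pi$, and since $\zeta$ is nonvanishing on $\partial D_\epsilon^1$, a continuous branch of $\log\zeta$ along $\gamma^+$ produces $\textup{Im}(A)\in\pi\mathbb{Z}$ in the limit. The delicate step is to exclude any nontrivial multiple; I would use the arbitrariness of the contour permitted by Definition~\ref{simpleD1} together with the fact that $\gamma^+$ sits in the simply connected half-strip $\{\sigma>\frac12\}\cap D_m$ to rule out a nonzero branch shift in the limit. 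Combining $\textup{Im}(A)\to 0$ with the $2\pi i$ from the $\chi$-integral then yields the claimed value $2\pi i$.
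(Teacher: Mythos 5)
Your proposal is, up to a change of coordinates, the same argument as the paper's: you split $\partial D_\epsilon^1$ by the reflection $s\mapsto 1-\overline{s}$, use the functional equation to peel off a $\chi'/\chi$ term, and use the reality of $\zeta$ at the two endpoints (where $\chi\to 1$) to try to cancel the two halves against each other. Your evaluation $\int_{\gamma^-}\chi'(s)/\chi(s)\,ds\to 2\pi i$ is exactly the paper's $\int_{\beta^0}dz/z\to 2\pi i$ in \eqref{GInt}, and your residual claim $\textup{Im}(A)\to 0$ is exactly the paper's equation \eqref{conjInt}, restated in the $s$-plane instead of the $z$-plane. That part of the bookkeeping (orientations, the Schwarz reflection, the $+2\pi$ change of $\arg\chi$ along the left arc) is carried out correctly.

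The genuine gap is that $\textup{Im}(A)\to 0$ is precisely the step you do not prove, and it is not a removable technicality. By the argument principle the left side of your final identity equals $2\pi i N$, where $N$ is the number of zeros of $\zeta$ inside $D_\epsilon^1$; combined with your computation this gives $\textup{Im}(A)\to\pi(N-1)$. So proving $\textup{Im}(A)\to 0$ is \emph{equivalent} to the lemma: a zero of $\zeta$ inside $D_\epsilon^1$ off the critical line (or two zeros on it) would force $\textup{Im}(A)$ to converge to a nonzero multiple of $\pi$. The endpoint reality of $\zeta$ only fixes $\textup{Im}(A)$ modulo $\pi\mathbb{Z}$, as you note, and neither of your proposed remedies closes this: the simple connectivity of $\{\sigma>\frac{1}{2}\}\cap D_m$ is irrelevant because $\gamma^+$ must be deformed within the set where $\zeta\neq 0$, and any such homotopy leaves $\textup{Im}(A)$ unchanged while a deformation across a zero changes the very quantity in question. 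For what it is worth, the paper's own proof has the identical gap at \eqref{conjInt}: the two logarithmic integrals along $\gamma$ and $\overline{\gamma}$ are equated there because the endpoint values of $\log\zeta$ agree in the limit, which ignores the $2\pi i\mathbb{Z}$ path dependence of a continuous branch of $\log\zeta$ --- the winding that the lemma is supposed to control. Your writeup therefore faithfully reproduces the paper's argument, but the ``delicate step'' you defer is the entire content of the statement, and neither you nor the paper supplies it.
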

		\begin{proof}
			The main symbols are illustrated in Figure \ref{myfigure}.
			
			Step 1. Choose $\epsilon$ for the domain $D_{\epsilon}^1$.
			
			When $s=\frac{1}{2}+it$, we have $1-s=\frac{1}{2}-it=\overline{s}$. By \eqref{basicfun} we obtain
			\begin{equation}
			\zeta(s)-\overline{\zeta(s)} = (\chi(s)-1)\overline{\zeta(s)},
			\end{equation}
			which requires $\zeta(s)$ to be real when $\chi(s)=1$. 
			
			Since the zeros of analytic $\zeta(s)$ are isolated, there exists a small $\epsilon_0>0$ such that $\zeta(\frac{1}{2}+i(t_m+\epsilon))\ne0$ and $\zeta(\frac{1}{2}+i(t_{m+1}+\epsilon))\ne0$ for all $0<|\epsilon|<\epsilon_0$.
			
			Choose $0<\epsilon<\epsilon_0$ for the domain $D_{\epsilon}^1$, where
			\begin{equation}\label{realValue}
			\begin{split}
			\lim\limits_{\epsilon\to0}\zeta(s_m^+)=\lim\limits_{\epsilon\to0}\overline{\zeta(s_m^+)}=\lim\limits_{\epsilon\to0}\zeta(\overline{s_m^+}), \\ \lim\limits_{\epsilon\to0}\zeta(s_{m+1}^-)=\lim\limits_{\epsilon\to0}\overline{\zeta(s_{m+1}^-)}=\lim\limits_{\epsilon\to0}\zeta(\overline{s_{m+1}^-}),
			\end{split}			
			\end{equation}
			since $\chi(s)\to1$ and $\zeta(s)\to\overline{\zeta(s)}$ as $\epsilon\to0$.
			
			Let $\gamma$ and $\overline{\gamma}$ be any arc and its conjugate respectively, where $\gamma$ starts at $s_{m+1}^-$ and ends at $s_m^+$. When $\zeta(s)\ne0$ on $\gamma$ or $\overline{\gamma}$, the lorgarithmic integral
			\begin{equation}\label{conjInt}
			\begin{split}
			\lim\limits_{\epsilon\to0}\Big\{\int_{\gamma}\frac{\zeta'(s)}{\zeta(s)}ds - \int_{\overline{\gamma}}\frac{\zeta'(s)}{\zeta(s)}ds\Big\} &=  \lim\limits_{\epsilon\to0}\Big\{\log\zeta(s)\Big|_{s_{m+1}^-}^{s_m^+} - \log\zeta(s)\Big|_{\overline{s_{m+1}^-}}^{\overline{s_m^+}}\Big\} \\
			&= 0	  			 
			\end{split}
			\end{equation}
			for $\epsilon\in(0,\epsilon_0)$.
			
			Step 2. Integrate along the boundary $\partial D_{\epsilon}^1$.
			
			Suppose the boundary $\partial D_{\epsilon}^1\subset D_m=D(\phi_2,\phi_1)$ is separated into two arcs $\gamma^0$ and $\gamma=\{1-\overline{s}|s\in \gamma^0\}$ by the vertical line $\{s|\textup{Re}(s)=\frac{1}{2}\}$. 
			And the arc $\overline{\gamma}=\{\overline{s}|s\in \gamma\}$ is the conjugate of the arc $\gamma$.
			
			Let $$\chi:\gamma^0\to\beta^0,\quad\gamma\to\beta,\quad\overline{\gamma}\to\overline{\beta}.$$ 
			Then the arc $\overline{\beta}=\{z|\frac{1}{z}\in\beta^0\}\subset S_m^*$ and the arc $\beta=\{z|\overline{z}\in\overline{\beta}\}\subset S_m$.
			And the map $\chi$ sends both $\gamma_{\phi_1}$ and $\gamma_{\phi_2}$ to the branch cut $(0,+\infty)$.
			
			When the point $s\in\gamma^0$ starts at $s_{m+1}^-$ and ends at $s_m^+$ in the counterclockwise direction of $\partial D_{\epsilon}^1$, the point $1-\overline{s}\in\gamma$ starts at $s_{m+1}^-$ and ends at $s_m^+$ in the clockwise direction of $\partial D_{\epsilon}^1$, and the point $1-s\in\overline{\gamma}$ starts at $\overline{s_{m+1}^-}$ and ends at $\overline{s_m^+}$.
			
			As $s$ describes the arc $\gamma^0$, the value $z=\chi(s)\in\beta^0$ moves continuously, starting at $1+i0^+$ on the top edge of the branch cut $(0,+\infty)$ and ending at $1+i0^-$ on the bottom edge of $(0,+\infty)$, with $\arg(z)$ increasing from $\phi_2=-2\pi (m+1)$ to $\phi_1=-2\pi m$.
			
			For nonvanishing $\zeta(s)$, taking the logarithm of \eqref{funEquation}, 
			we obtain
			\begin{equation}\label{funEquLog}
			\log G(z) = \log z + \log G_-(\frac{1}{z})
			\end{equation}
			The derivative of \eqref{funEquLog} with respect to $z$ is
			\begin{equation}\label{funEquDlog}
			\frac{G'(z)}{G(z)} = \frac{1}{z} - \frac{1}{z^2}\frac{G'_-(\frac{1}{z})}{G_-(\frac{1}{z})},		
			\end{equation}
			which can be integrated along the arc $\beta^0\subset S_m$ as
			\begin{equation}\label{GInt}
			\begin{split}
			\int_{\beta^0}\frac{G'(z)}{G(z)}dz &= \int_{\beta^0}\frac{dz}{z} - \int_{\beta^0}\frac{1}{z^2}\frac{G'_-(\frac{1}{z})}{G_-(\frac{1}{z})}dz \\
			&= \int_{1+i0^+}^{1+i0^-}\frac{dz}{z} + \int_{\overline{\beta}}\frac{G'_-(\eta)}{G_-(\eta)}d\eta \\
			&\to 2\pi i + \int_{\overline{\beta}}\frac{G'_-(\eta)}{G_-(\eta)}d\eta, \quad\epsilon\to0.
			\end{split}		
			\end{equation}
			
			For nonvanishing $\zeta(s)$, by \eqref{conjInt} and \eqref{GInt} we obtain
			\begin{equation}\label{TriSteps}
			\begin{split}
			\lim\limits_{\epsilon\to0}\int_{\partial D_{\epsilon}^1}\frac{\zeta'(s)}{\zeta(s)}ds &= 
			\lim\limits_{\epsilon\to0}\Big\{\int_{\gamma^0}\frac{\zeta'(s)}{\zeta(s)}ds - \int_{\gamma}\frac{\zeta'(s)}{\zeta(s)}ds\Big\} \\
			&= \lim\limits_{\epsilon\to0}\Big\{\int_{\gamma^0}\frac{\zeta'(s)}{\zeta(s)}ds - \int_{\overline{\gamma}}\frac{\zeta'(s)}{\zeta(s)}ds\Big\} \\
			&= \lim\limits_{\epsilon\to0}\Big\{\int_{\beta^0}\frac{G'(z)}{G(z)}dz - \int_{\overline{\beta}}\frac{G'_-(\eta)}{G_-(\eta)}d\eta\Big\}	\\
			&= 2\pi i.		
			\end{split}		
			\end{equation}
			
		\end{proof}		

		\begin{figure}			
			\vspace{5pt}
			\scalebox{0.5}[0.5]{\includegraphics{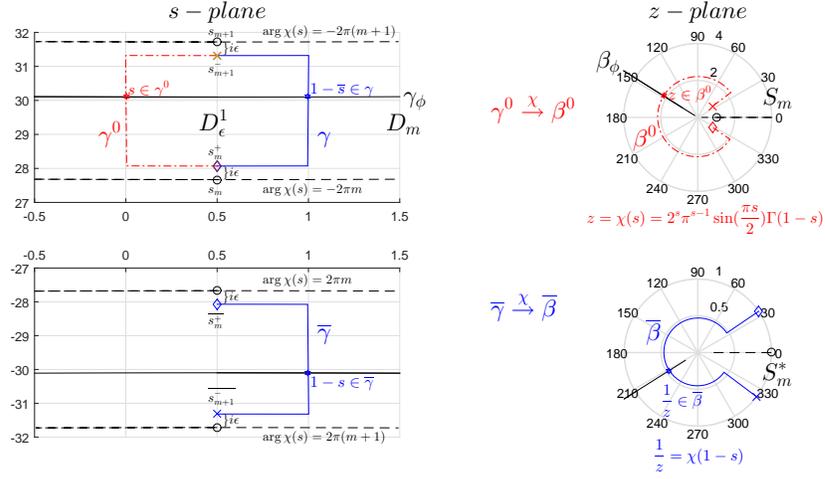}}
			\caption[]{Illustration of the proof of Lemma \ref{Res1}.}
			\label{myfigure}
		\end{figure}
		
		\begin{lem}\label{Res2}			
			The logarithmic integral of nonvanishing zeta function 
			\begin{equation}
			\lim\limits_{\epsilon\to0}\int_{\partial D_{\epsilon}^2}\frac{\zeta'(s)}{\zeta(s)}ds = 0
			\end{equation}
			where $D_{\epsilon}^2$ is type-two simple domain defined in Definition \ref{simpleD2}.			
		\end{lem}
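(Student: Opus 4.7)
The plan is to mirror the three-step structure of the proof of Lemma \ref{Res1}, the sole substantive change appearing in the evaluation of the logarithmic integral $\int_{\beta^0}dz/z$, which this time will vanish rather than contribute $2\pi i$.

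First, I would repeat Step~1 of Lemma \ref{Res1} verbatim. Because $\arg(\chi(\frac{1}{2}+it_m))=-2\pi m$ forces $\chi(\frac{1}{2}+it_m)=1$, the functional equation gives $\zeta(\frac{1}{2}+it_m)\in\mathbb{R}$, and the isolation of zeros of $\zeta$ yields an $\epsilon_0>0$ such that $\zeta(s_m^\pm)\ne 0$ for $0<\epsilon<\epsilon_0$. The identity \eqref{conjInt} then remains in force: for any arc $\gamma$ connecting $s_m^-$ and $s_m^+$ on which $\zeta$ is nonvanishing, one still has $\lim_{\epsilon\to 0}\bigl\{\int_\gamma-\int_{\overline\gamma}\bigr\}(\zeta'/\zeta)\,ds=0$.

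Next, decompose $\partial D_\epsilon^2$ exactly as in the type-one case: let $\gamma^0$ be the portion with $\textup{Re}(s)>\frac{1}{2}$, let $\gamma=\{1-\overline{s}:s\in\gamma^0\}$ be its symmetric partner, and let $\overline\gamma$ be the complex conjugate of $\gamma$. Push everything through $\chi$ to obtain $\beta^0=\chi(\gamma^0)\subset S_m$ and $\overline\beta=\chi(\overline\gamma)=\{1/z:z\in\beta^0\}\subset S_m^*$. The critical geometric observation is this: by Definition \ref{simpleD2}, the two intersections of $\partial D_\epsilon^2$ with the critical line satisfy $\arg(\chi(s_m^\pm))\to -2\pi m$ from opposite sides, and $-2\pi m$ lies in the \emph{interior} of the interval $(-\pi-2\pi m,\pi-2\pi m)$ defining $D_m$. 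Consequently $\beta^0$ lies in the simply connected slit plane $S_m$ (cut along $(-\infty,0]$), does not wind around the origin, and has both endpoints tending to the single point $1$ as $\epsilon\to 0$.

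Finally, applying \eqref{funEquDlog} along $\beta^0$ yields
\[
\int_{\beta^0}\frac{G'(z)}{G(z)}\,dz \;=\; \int_{\beta^0}\frac{dz}{z}\;+\;\int_{\overline\beta}\frac{G'_-(\eta)}{G_-(\eta)}\,d\eta.
\]
Since $0\notin S_m$ and $S_m$ is simply connected, the single-valued branch of $\log$ on $S_m$ gives $\int_{\beta^0}dz/z=\log\chi(s_m^+)-\log\chi(s_m^-)\to 0$ as $\epsilon\to 0$. The same chain of equalities as in \eqref{TriSteps} then delivers
\[
\lim_{\epsilon\to 0}\int_{\partial D_\epsilon^2}\frac{\zeta'(s)}{\zeta(s)}\,ds \;=\; \lim_{\epsilon\to 0}\Bigl\{\int_{\beta^0}\frac{G'(z)}{G(z)}\,dz-\int_{\overline\beta}\frac{G'_-(\eta)}{G_-(\eta)}\,d\eta\Bigr\} \;=\; 0.
\]
The main obstacle I anticipate is the rigorous verification that $\beta^0$ does not wind around the origin for finite $\epsilon$; this is precisely where type-two contours diverge from type-one. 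It reduces to the containment $D_\epsilon^2\subset D_m$ together with Corollary \ref{chiPatch}, which forces $\arg(z)$ to stay strictly within $(-\pi-2\pi m,\pi-2\pi m)$. Without this containment the integral $\int_{\beta^0}dz/z$ would cease to be a plain difference of logarithms, and the vanishing conclusion could fail.
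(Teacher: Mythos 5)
Your proposal is correct relative to the paper's framework and takes essentially the same route as the paper's own (explicitly sketched) proof: both choose the branch cut $(-\infty,0]$ so that $\beta^0$ stays in a single sheet $S_m$ without winding around the origin, observe that the total increase in $\arg(z)$ along $\beta^0$ is zero so that $\int_{\beta^0}dz/z\to 0$ rather than $2\pi i$, and then reuse the decomposition and cancellation of \eqref{conjInt} and \eqref{TriSteps} from Lemma \ref{Res1}. You in fact supply more detail than the paper does, in particular by locating $-2\pi m$ in the interior of $(-\pi-2\pi m,\pi-2\pi m)$ and by naming the single-valued logarithm on the simply connected slit plane as the reason the $dz/z$ term reduces to $\log\chi(s_m^+)-\log\chi(s_m^-)\to 0$.
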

		\begin{proof}
			
			We only sketch the proof here.
			
			The branch cut is chosen to be $(-\infty,0]$, such that all the images of the arcs broken from $\partial D_{\epsilon}^2$ or from its conjugate $\partial \overline{D_{\epsilon}^2}$ stay inside one sheet $S_m$ or inside its conjugate $S_m^*$.
			
			When the point $s\in\gamma^0$ starts at $s_m^+$ and ends at $s_m^-$ in the counterclockwise direction of $\partial D_{\epsilon}^2$, the point $1-\overline{s}\in\gamma$ starts at $s_m^+$ and ends at $s_m^-$ in the clockwise direction of $\partial D_{\epsilon}^2$, and the point $1-s\in\overline{\gamma}$ starts at $\overline{s_m^+}$ and ends at $\overline{s_m^-}$.
			
			As $s$ describes the arc $\gamma^0$, the value $z=\chi(s)\in\beta^0$ moves continuously, starting at $1+i0^+$ on the top edge of $(0,+\infty)$ and ending at $1+i0^-$ on the bottom edge of $(0,+\infty)$, without cutting through the branch cut $(-\infty,0]$, and the total increase in $\arg(z)$ is zero.
			
			That makes the difference in \eqref{GInt}, which leads to
			\begin{equation}
			\begin{split}
			\int_{\beta^0}\frac{G'(z)}{G(z)}dz &= \int_{\beta^0}\frac{dz}{z} - \int_{\beta^0}\frac{1}{z^2}\frac{G'_-(\frac{1}{z})}{G_-(\frac{1}{z})}dz \\
			&= \int_{1+i0^+}^{1+i0^-}\frac{dz}{z} + \int_{\overline{\beta}}\frac{G'_-(\eta)}{G_-(\eta)}d\eta \\
			&\to \int_{\overline{\beta}}\frac{G'_-(\eta)}{G_-(\eta)}d\eta, \quad\epsilon\to0
			\end{split}		
			\end{equation}
			and 
			\begin{equation}
			\begin{split}
			\lim\limits_{\epsilon\to0}\int_{\partial D_{\epsilon}^2}\frac{\zeta'(s)}{\zeta(s)}ds &= 0.		
			\end{split}		
			\end{equation}
			
		\end{proof}		 
		
		The contours $\partial D_{\epsilon}^1$ and $\partial D_{\epsilon}^2$ can be chosen rather arbitrarily such that any domains in the critical strip can be covered by the domains $D_{\epsilon}^1$ or $D_{\epsilon}^2$. This leads to the final conclusion about the nontrivial zeros of $\zeta(s)$ when $s$ is far away from the real axis.  
		\begin{thm}\label{RH}
			Suppose $t$ is sufficiently large. 
			All the zeros of $\zeta(\sigma+it)$ are on the critical line $\{\sigma+it|\sigma=\frac{1}{2}\}$.
			And there exists one and only one zero of $\zeta(\frac{1}{2}+it)$ where $-2\pi (m+1)<\arg(\chi(\frac{1}{2}+it))<-2\pi m$. 	  
		\end{thm}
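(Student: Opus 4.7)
The plan is to combine the argument principle with the two integral values provided by \lemref{Res1} and \lemref{Res2}, exploiting the involution $s\mapsto 1-\overline{s}$ on the zero set of $\zeta$ together with the shape freedom permitted to the simple domains. Since $\zeta(s)$ has no poles inside the critical strip, the argument principle yields
\begin{equation*}
N(D_\epsilon^j)=\frac{1}{2\pi i}\int_{\partial D_\epsilon^j}\frac{\zeta'(s)}{\zeta(s)}\,ds,\qquad j=1,2,
\end{equation*}
where $N(D)$ denotes the number of zeros of $\zeta$ inside $D$ counted with multiplicity. Both sides are integers, so letting $\epsilon\to 0$ in \lemref{Res1} and \lemref{Res2} forces $N(D_\epsilon^1)=1$ and $N(D_\epsilon^2)=0$ for all sufficiently small $\epsilon>0$.

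The second ingredient is a parity obstruction. The functional equation \eqref{basicfun} combined with $\zeta(\overline{s})=\overline{\zeta(s)}$ shows that $\zeta(s_0)=0$ if and only if $\zeta(1-\overline{s_0})=0$, and the involution $s\mapsto 1-\overline{s}$ fixes exactly the critical line. Because every simple domain (Definition \ref{simpleD}) is symmetric about the critical line, any zero $s_0$ inside $D$ with $\textup{Re}(s_0)\ne\tfrac{1}{2}$ is automatically accompanied by a \emph{distinct} partner $1-\overline{s_0}\in D$. Consequently the number of off-line zeros contained in any simple domain is always even.

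Now the theorem follows by a two-case contradiction. Suppose $\zeta(\sigma_0+it_0)=0$ with $\sigma_0\ne\tfrac{1}{2}$ and $t_0$ large. If $t_0$ lies strictly between two consecutive heights $t_m<t_0<t_{m+1}$ defined by $\arg(\chi(\tfrac{1}{2}+it_m))=-2\pi m$, then by Corollary \ref{chiPatch}, which states that $D_m$ projects via $\chi$ onto the entire slit plane, a type-one domain $D_\epsilon^1$ can be shaped to contain both $\sigma_0+it_0$ and its mirror $1-\sigma_0+it_0$. The parity obstruction gives $N(D_\epsilon^1)\ge 2$, contradicting $N(D_\epsilon^1)=1$. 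If instead $t_0=t_m$ for some $m$, the same flexibility lets us enclose $\sigma_0+it_m$ and its mirror inside a type-two domain $D_\epsilon^2$, contradicting $N(D_\epsilon^2)=0$. Thus every zero with large $t$ lies on the critical line; moreover $N(D_\epsilon^2)=0$ also yields $\zeta(\tfrac{1}{2}+it_m)\ne 0$, while $N(D_\epsilon^1)=1$ pins exactly one (necessarily simple) zero in the open segment $-2\pi(m+1)<\arg(\chi(\tfrac{1}{2}+it))<-2\pi m$.

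The main obstacle is not the argument-principle bookkeeping but the \emph{feasibility} of the covering step: one must verify that every prospective off-line zero and every height $t_m$ can genuinely be realised as an interior point of some admissible $D_\epsilon^1$ or $D_\epsilon^2$ whose boundary simultaneously satisfies Definition \ref{simpleD}(ii), namely $\zeta\ne 0$ on $\partial D_\epsilon^j$. Enlarging the horizontal extent is permitted by Corollary \ref{chiPatch} since $D_m$ reaches both $\sigma\to\pm\infty$; keeping $\zeta\ne 0$ on the chosen boundary is possible because zeros of $\zeta$ are isolated, so a small perturbation of any piecewise smooth symmetric boundary avoids them. These feasibility checks, rather than the integral calculus, constitute the delicate step of the argument.
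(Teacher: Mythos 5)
Your proposal is essentially the paper's own argument: the argument principle applied to Lemma~\ref{Res1} and Lemma~\ref{Res2}, the symmetry of simple domains forcing off-line zeros to occur in pairs $\{\rho,1-\overline{\rho}\}$, type-one domains to handle points interior to a strip $D_m$ and type-two domains to handle points on its boundary, with the shape-flexibility and ``$\zeta\ne 0$ on the boundary'' feasibility issues addressed just as the paper does (the paper invokes von~Mangoldt's count to get finiteness where you invoke isolation of zeros). The only slip is that your case split should be indexed by $\arg(\chi(\sigma_0+it_0))$ relative to the values $-2\pi m$ --- i.e.\ by whether the point lies in the open strip $D_m$ or on an argument-preserving boundary arc $\gamma_{-2\pi m}$ --- rather than by $t_0$ relative to the critical-line heights $t_m$, since the strips are bounded by the arcs $\gamma_{\phi}$ and not by horizontal lines.
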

		\begin{proof}
			Let $s=\sigma+it$.
			
			The FAR domain $D$ in the $s$-plane can be separated into infinitely many horizontal strips   
			\begin{equation}
			D_m=D(\phi_2,\phi_1)=\bigcup_{\phi_2<\phi< \phi_1}\gamma_{\phi}
			\end{equation}
			where $m$ is sufficiently large integer and $\phi_2 = \phi_1 - 2\pi$, with their boundaries $\gamma_{\phi_1}$ and $\gamma_{\phi_2}$. 
			
			Step 1. Estimate the number of zeros and poles of $\zeta(s)$ in each domain $D_m$ with its boundary.
			
			Let $$\overline{D}_m=\bigcup_{\phi_2\le\phi\le\phi_1}\gamma_{\phi}$$
			and $D_0=\overline{D}_m\cap\{s|0<\sigma<1\}$ where $\phi_1=-2\pi m$. The domain $D_0$ is bounded and suppose $T=\sup\limits_{\sigma+it\in D_0}t$.
			
			By von Mangoldt's result \cite{Mangoldt05} the number of zeros in the domain $\{\sigma+it|0<\sigma<1,0<t\le T\}$ is $\frac{T}{2\pi}\log\frac{T}{2\pi}-\frac{T}{2\pi}+O(\log T)$, requiring a less number of zeros in each domain $\overline{D}_m$.
			The only pole of $\zeta(s)$ is at $s=1$.
			
			Therefore each domain $\overline{D}_m$ contains a finite number of zeros and no pole. 
			
			Step 2. Count the number of zeros of $\zeta(s)$ in each domain $D_m$.
			
			Suppose $\rho_1,\rho_2,\dots,\rho_n$ are the finite number of zeros of $\zeta(s)$ in the domain $D_m$. And the punctured domain $D_m\backslash\{\rho_1,\rho_2,\dots,\rho_n\}$ is multiply-connected.
			
			Supposing any $\rho_n\notin \{\sigma+it|\sigma=\frac{1}{2}\}$, we could find a domain symmetric with respect to the critical line in the connected $D_m$, with no zeros on its boundary except $\rho_n$ and $1-\overline{\rho}_n$. We could change the boundaries near $\rho_n$ and $1-\overline{\rho}_n$, constructing a pair of type-one simple domains, with one containing $\rho_n$ and the other not. The argument principle required that the logarithmic integrals along the boundaries of this pair of domains differ by $4\pi i$ for any small $\epsilon$. However, Lemma \ref{Res1} required that both integrals should tend to $2\pi i$ as $\epsilon\to0$. That made a contradiction.
			
			Therefore all the zeros of $\zeta(\sigma+it)$ in the domain $D_m$ are on the critical line, and Lemma \ref{Res1} rules the number to be one.
			In other words, each time $\chi(\frac{1}{2}+it)$ loops once around the origin from $\{1\}$ to $\{1\}$ on the unit circle $S^1$ with increasing $t$, there exists one zero of $\zeta(\frac{1}{2}+it)$.   
			
			Step 3. Count the number of zeros of $\zeta(s)$ on the boundary of domain $D_m$.			
			
			Let
			\begin{equation}
			D_m^2=D(\phi_2,\phi_1)=\bigcup_{\phi_2<\phi< \phi_1}\gamma_{\phi}
			\end{equation}
			where $\phi_1=\pi-2\pi m$ and $\phi_2=-\pi-2\pi m$.
			
			Supposing any $\rho\in\gamma_{\phi}$ where $\phi=-2\pi m$ such that $\zeta(\rho)=0$, then a type-two simple domain $D_{\epsilon}^2\subset D_m^2$ could be constructed to contain $\rho$. The argument principle required that the logarithmic integral along $\partial D_{\epsilon}^2$ to be no less than $2\pi i$ for any small $\epsilon$. However, Lemma \ref{Res2} required that the integral should tend to $0$ as $\epsilon\to0$. That made a contradiction.
			
			Therefore no zero of $\zeta(s)$ is on the boundary of domain $D_m$.
			In other words, $\zeta(s)\ne0$ when $\arg\chi(s)=-2\pi m$.
			
		\end{proof}
		
		Theorem \ref{RH} claims that the argument of $\chi(\frac{1}{2}+it)$ determines the distribution of zeros of $\zeta(s)$ on the critical line, which can be estimated by \eqref{argchi}. Roughly speaking, there exists one nontrivial zero when $t-t\log\frac{t}{2\pi}$ decreases by $2\pi$ for sufficiently large $t$. The result is more subtle than Riemann's initial hypothesis.		
		
		The distribution of the first 12 nontrivial zeros of $\zeta(\frac{1}{2}+it)$ is plotted in Figure \ref{myfigure2} as an illustration.
		The imaginary part $t$ serves as the Y axis, and the consecutive zeros appear rather in chaos along the Y axis. In fact it is claimed that there are infinitely many consecutive zeros with a gap at least 3 times larger than the average spacing, and there are also infinitely many consecutive zeros with a gap at most 0.6 times smaller than the average spacing.
		When $\frac{1}{2\pi}\arg(\chi(\frac{1}{2}+it))$ serves as the X axis, the consecutive zeros appear in order ruled by Theorem \ref{RH} along the X axis, though Theorem \ref{RH} applies for sufficiently large $t$. 
		
		\begin{figure}			
			\vspace{5pt}
			\scalebox{0.75}[0.75]{\includegraphics{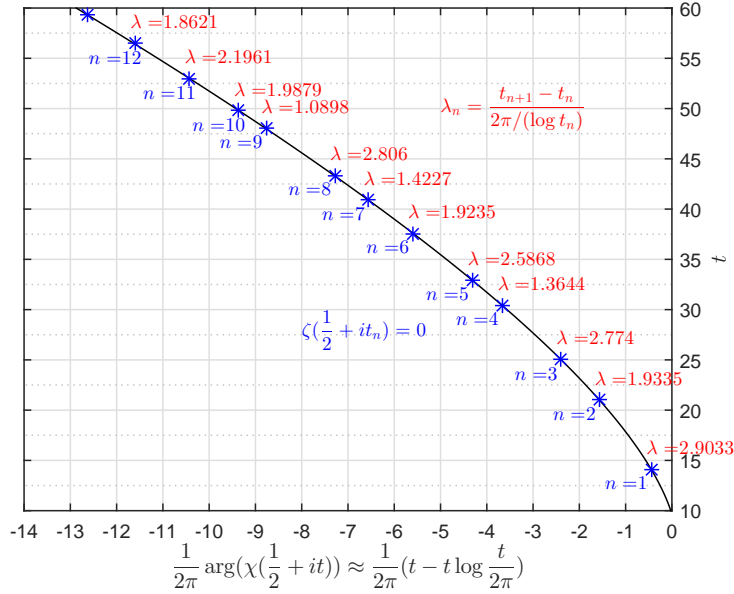}}
			\caption[]{Plot of the first 12 nontrivial zeros of $\zeta(\frac{1}{2}+it)$.}
			\label{myfigure2}
		\end{figure}
		
	\section{Conclusion}
		The chi function and tau function, as a pair of special functions similiar to the exponential function and logarithm function, can be expected to play more roles in the complex analysis than to explore the distribution of nontrivial zeros of Riemann zeta function. Riemann zeta function is shown to be a kind of branched functions which can be defined in a simple functional equation by a pair of conjugated branches. The proposed cross-branch technique is suitable to calculate the improper logarithmic integrals of this kind of functions along well-chosen contours symmetric with respect to the critical line.

	\bibliography{refer}
	\bibliographystyle{aomplain}
	
	\end{document}